\definecolor{aleacolor}{rgb}{0.16,0.59,0.78}
\renewcommand{\cite}{\citet}
\theoremstyle{plain}
\newtheorem{theorem}{Theorem}[section]                      
\newtheorem{proposition}[theorem]{Proposition}    
\newtheorem{lemma}[theorem]{Lemma}
\newtheorem{corollary}[theorem]{Corollary}
\theoremstyle{definition}
\theoremstyle{remark}
\newtheorem{remark}[theorem]{Remark}
\newtheorem{example}[theorem]{Example}
\makeatletter \@addtoreset{equation}{section} \makeatother
\DeclareSymbolFont{bbold}{U}{bbold}{m}{n}
\DeclareMathSymbol{\bblambda}{\mathord}{bbold}{21} 
\DeclareMathSymbol{\bbone}{\mathord}{bbold}{49}    
\newcommand{\CC}{\mathbb{C}}
\newcommand{\RR}{\mathbb{R}}
\newcommand{\NN}{\mathbb{N}}
\newcommand{\ZZ}{\mathbb{Z}}
\newcommand{\Zpl}{\ZZ_+}
\newcommand{\overlZpl}{\overline{\ZZ}_+}
\newcommand{\cala}{\mathcal{A}}
\newcommand{\calb}{\mathcal{B}}
\newcommand{\calf}{\mathcal{F}}
\newcommand{\calm}{\mathcal{M}}
\newcommand{\cals}{\mathcal{S}}
\newcommand{\frakX}{\mathfrak{X}}
\newcommand{\fraks}{\mathfrak{s}}
\newcommand{\dd}{{\mathrm{d}}}
\newcommand{\ee}{{\mathrm e}}
\newcommand{\pii}{{\mathrm{\pi}}}
\newcommand{\ab}[1]{\lvert#1\rvert}            
\newcommand{\Ab}[1]{\Bigl\lvert#1\Bigr\rvert}  
\newcommand{\bfcdot}{{\boldsymbol{\cdot}}}
\newcommand{\card}[1]{\lvert#1\rvert}       
\newcommand{\Ch}{\mathrm{Ch}}               
\newcommand{\compl}{\mathrm{c}}     
\newcommand{\CPo}{\mathrm{CPo}}     
\newcommand{\dirac}{\delta}         
\newcommand{\dtv}{d_{\mathrm{TV}}}  
\renewcommand{\geq}{\geqslant}
\renewcommand{\leq}{\leqslant}
\newcommand{\norm}[1]{\lVert#1\rVert}            
\newcommand{\Norm}[1]{\Bigl\lVert#1\Bigr\rVert}  
\newcommand{\po}{\mathrm{po}}    
\newcommand{\Po}{\mathrm{Po}}    
\newcommand{\set}[1]{\underline{#1}}
\newcommand{\setn}[1]{\underline{#1}_0} 
\newcommand{\tvm}[1]{\lvert#1\rvert}   
\newcommand{\unitvec}[1]{e_{#1}^{}}
\newcommand{\vecsum}[1]{\lvert#1\rvert}
\newcommand{\binomial}[2]{\genfrac{(}{)}{0pt}{}{#1}{#2}}
\begin{document}
\title[Multivariate and compound Poisson approximation]{Refined 
total variation bounds in the multivariate and compound 
Poisson approximation}

\author{Bero Roos}

\address{FB IV -- Department of Mathematics, \newline
University of Trier, \newline
54286 Trier, Germany.}

\email{bero.roos@uni-trier.de}
\urladdr{http://www.math.uni-trier.de/{\textasciitilde}roos/}


\subjclass[2000]{
Primary 60F05;   
secondary 60G50, 
62E17.           
} 
\keywords{
Convolution,
generalized multinomial distribution,
Kerstan's method,
measurable Abelian semigroup,
Poisson process approximation, 
signed measure,
smoothness inequality.
}

\begin{abstract}
We consider the approximation of a convolution of possibly different 
probability measures by (compound) Poisson distributions and also by 
related signed measures of higher order. We present new total 
variation bounds having a better structure than those from the 
literature. A numerical example illustrates the usefulness of the 
bounds, and an application in the Poisson process approximation is 
given. The proofs use arguments from \citet{MR0165555} and
\citet{MR1701409} in combination with new smoothness inequalities, 
which could be of independent interest. 
\end{abstract}

\maketitle
\section{Introduction}
\subsection{Aim of the paper}
Nowadays, there are numerous results in the (compound) Poisson 
approximation of convolutions of probability distributions
(cf.\ \citealp{MR871856}; \citealp{MR1163825}). 
However, it turned out that the 
investigation in the multidimensional case is somewhat difficult. 
Even in the simple case of Poisson approximation of the generalized 
multinomial distribution, the correct order of approximation is not 
exactly 
known. Indeed, to the best of our knowledge, the literature does not 
contain any lower and upper total variation bounds differing only by 
an absolute constant factor. The problem here is that not only the 
number of convolution factors but also the dimension can be 
arbitrarily large. But there are useful approximation results, 
see, e.g., \cite{MR0160279}, U.\ \cite{MR0179820}, \cite{MR935295}, 
\cite{MR1701409}, \cite{MR2205332}. In this paper, we show how some 
bounds from \cite{MR1701409} can be further substantially improved. 
We also indicate 
how these improved bounds in combination with ideas in 
\cite{MR2310979} can be useful in the compound Poisson approximation. 

The paper is organized as follows. In the next three subsections, we 
explain the notation, comment on the method used,
give a review of some results from the 
literature and discuss the benefits of some result of the 
present paper.
Sections \ref{mainres} and \ref{s1545} are devoted to the main 
results and an application in the Poisson process approximation.  
In Section \ref{Proofs}, we present some auxiliary norm estimates 
including smoothness inequalities as well as the proofs 
of the results. 
\subsection{Notation}
In what follows, let $(\frakX,+,\cala)$ be a measurable 
Abelian semigroup with zero element,  
that is, $(\frakX,+)$ is a commutative semigroup with identity 
element~$0$ 
and $\cala$ is a $\sigma$-algebra of subsets of $\frakX$  such that 
the mapping $\frakX\times\frakX\ni(x,y)\mapsto x+y\in\frakX$ from 
$(\frakX\times\frakX,\cala\otimes\cala)$ to $(\frakX,\cala)$ is 
measurable. 
In particular this implies that, for arbitrary $y\in\frakX$, 
the mapping $\frakX\ni x\mapsto x+y\in\frakX$ is measurable as well. 
The approach used in this paper requires 
a measure theoretic setting. Random variables are rarely needed or 
used.
Let $\calf$ (resp.~$\calm$) be the set of all probability 
distributions (resp.~finite signed measures) on $(\frakX,\cala)$. 
Products and powers of finite signed measures in $\calm$ are defined 
in the convolution sense, that is, for $V,W\in\calm$ and $A\in\cala$, 
we write 
\begin{align*}
VW(A)=\int_\frakX V(\{y\in\frakX\,|\,x+y\in A\})\,\dd W(x).
\end{align*}
Empty products and powers of signed measures in $\calm$ are defined 
to be $\dirac_0$, where  $\dirac_x$  is the Dirac measure at 
point $x\in\frakX$. Let $V=V^{+}-V^{-}$
denote the Hahn-Jordan decomposition of~$V\in\calm$ and let
$\tvm{V}=V^{+}+V^{-}$ be its total variation measure. The total
variation norm of $V$ is defined by  $\norm{V}=\tvm{V}(\frakX)$. 
We note that the total variation distance between two finite signed 
measures $V,W\in\calm$ is usually defined by 
$\dtv(V,W)=\sup_{A\in\cala}\ab{V(A)-W(A)}$. 
However, this distance is rarely needed or used here,
since $\dtv(V,W)=\frac{1}{2}\norm{V-W}$ provided that 
$V(\frakX)=W(\frakX)$, which in concrete situations is often the 
case. 
With the usual operations of real scalar multiplication, addition,  
together with convolution and the total variation norm, $\calm$ is a 
real commutative Banach algebra with unity $\dirac_0$,
see for example Section 2 in \citet{MR900332}. 
For $V\in\calm$ and a power series $g(z)=\sum_{m=0}^\infty a_mz^m$ 
with $a_m\in\RR$ converging absolutely for each complex $z\in\CC$ 
with 
$\ab{z}\leq\norm{V}$, we set $g(V)=\sum_{m=0}^\infty a_mV^m\in \calm$.
The exponential of $V\in\calm$ is defined by the finite signed measure
\begin{align*}
\ee^V=\exp(V)\,=\,\sum_{m=0}^\infty\frac{V^m}{m!}\in\calm.
\end{align*}
In particular, $\CPo(t,F):=\exp(t(F-\dirac_0))$ is the compound 
Poisson distribution with parameters $t\in[0,\infty)$,
$F\in\calf$. 
In other words, this is the distribution of the random sum 
$\sum_{j=1}^NX_j$, where $N$, 
$X_j$, $(j\in\NN)$ are independent random variables,
$N$ has values in $\Zpl:=\{0,1,2,\dots\}$ and has  
Poisson distribution 
$\Po(t):=\exp(t(\dirac_1-\dirac_0))=\CPo(t,\dirac_1)$ 
with mean $t$, 
whereas the $\frakX$-valued $X_j$ are identically distributed 
with distribution $F$. We denote  the counting 
density of $\Po(t)$ 
by $\po(\bfcdot,t):\,\ZZ\longrightarrow[0,1]$, where
$\ZZ$ is the set of integers, 
$\po(m,t)=\ee^{-t}\frac{t^m}{m!}$ for $m\in\Zpl$ 
and $\po(m,t)=0$ otherwise.
If $F$ and $G$ are non-negative measures on $(\frakX,\cala)$ and $F$ 
is absolutely continuous with respect to $G$, we write $F\ll G$.
For a set $A$, let $\bbone_{A}(x)=1$ if $x\in A$ and 
$\bbone_{A}(x)=0$ otherwise. 
Set $\set{0}=\emptyset$ and $\set{n}=\{1,\dots,n\}$ for 
$n\in\NN=\{1,2,\dots\}$; further, for  $n\in\Zpl$, 
set $\setn{n}=\{0,\dots,n\}$. For a finite set $J$, 
let $\card{J}$ be the number of its elements. 
Always, let $0^0=1$, $\frac{1}{0}=\infty$ and, for $k\in\ZZ$, 
$\sum_{m=k}^{k-1}=0$ be the empty sum. 
For $k\in\NN$, let 
$\set{k}^k_{\neq}
=\{(\ell_1,\dots,\ell_k)\in\set{k}^k\,|\,\ell_i\neq \ell_j
\mbox{ for all }i,j\in\set{k}\mbox{ with }i\neq j\}$ 
be the set of all permutations on the set $\set{k}$. 
We use the standard multi-index notation: For $d\in\NN$, 
$z=(z_1,\dots,z_d)\in\CC^d$ and $m=(m_1,\dots,m_d)\in\Zpl^d$,
set $z^m=\prod_{r=1}^dz_r^{m_r}$, 
$\vecsum{m}=\sum_{r=1}^dm_r$ and $m!=\prod_{r=1}^dm_r!$.
\subsection{On the method used} 
In our proofs, we make use of the fact that $\calm$ is a 
real commutative Banach algebra with unity. In the (compound) 
Poisson approximation such an approach has already been 
used by various authors. For example, \citet{MR0142174}, 
\citet{MR0383483}, \citet{MR1360203}, \citet{MR2141351} 
considered measures on a general measurable Abelian 
group. Other authors also used Banach algebra properties under other 
assumptions, see, for instance, \citet{MR0165555} using a complex 
variable approach, \citet{MR832029} using an operator semigroup 
framework, and \citet{MR1067026} for a unification of these methods. 

Our proofs are based on ideas of \citet{MR0165555} in combination 
with 
arguments given in \cite{MR1701409} as well as new auxiliary norm 
estimates. For further papers using Kerstan's method, see 
H.\ \cite{MR0190979}, U. \cite{MR0179820}, \cite{MR3130419}, 
\cite{MR3270603}, and also some of the references cited therein. 

The main idea of \cite{MR0165555} was to expand the difference 
of two univariate distributions in a certain way and to estimate the 
norm terms involved using the Cauchy integral formula. 
In \citet{MR1701409}, the corresponding multidimensional 
generalization was studied, which made it necessary to slightly 
modify the expansion. The norm terms have been estimated using the 
Cauchy-Schwarz inequality without using integrals. 
In the present paper, we use a different expansion (see formulas (
\ref{e1649865}) and (\ref{e729875}) below) and use new norm term 
estimates using Charlier polynomials and the Cauchy-Schwarz inequality
(see Subsection \ref{s5395}). 

We note that in this paper it suffices to consider 
measures on a measurable Abelian semigroup with zero element
rather than a measurable Abelian group. 
This makes it possible to use our results in the Poisson point 
process approximation, see Section~\ref{s1545}.
\subsection{Review of some known results} 
Let us consider some important results for discrete distributions 
on $(\frakX,+,\cala)=(\RR^d,+,\calb^d)$ for $d\in\NN$, where 
$\calb^d$ is the Borel $\sigma$-Algebra over $\RR^d$.
During this subsection, let $n\in\NN$ and, 
for $j\in\set{n}$ and $r\in\set{d}$, 
\begin{gather*}  
p_j,q_{j,r}\in[0,1] \mbox{ with } \sum_{r=1}^dq_{j,r}=1 \mbox{ and }
\lambda_r=\sum_{j=1}^np_jq_{j,r}>0, \quad
\lambda=\sum_{j=1}^np_j=\sum_{r=1}^d\lambda_r,\\
U_r=\dirac_{\unitvec{r}},\quad
Q_j=\sum_{r=1}^d q_{j,r}U_r,\quad
F_j=\dirac_0+p_j(Q_j-\dirac_0),\\
Q=\frac{1}{\lambda}\sum_{j=1}^np_jQ_j
=\frac{1}{\lambda}\sum_{r=1}^d\lambda_rU_r,\quad
F=\prod_{j=1}^n F_j,\quad 
G=\CPo(\lambda,Q)=\exp(\lambda(Q-\dirac_0)).
\end{gather*}
Here,  $\unitvec{r}\in\RR^d$ is the unit vector with $1$ at 
position $r$ and $0$ otherwise. In what follows, we discuss some 
bounds in the approximation of the distribution~$F$ by~$G$.
\subsubsection{The one-dimensional case $d=1$}
Here, we have $Q_1=\dots=Q_n=Q=\dirac_1$, 
such that $F=\prod_{j=1}^n (\dirac_0+p_j(\dirac_1-\dirac_0))$
is a so-called Bernoulli convolution and
$G=\Po(\lambda)$ is the Poisson distribution with 
mean $\lambda$. In this situation, one of the most remarkable 
results is the following: 
\begin{equation}\label{bh}
\frac{1}{7}\min\Bigl\{\frac{1}{\lambda},\,1\Bigr\}\sum_{j=1}^np_{j}^2
\leq\norm{F-G}
\leq 2\frac{1-\ee^{-\lambda}}{\lambda}\sum_{j=1}^np_{j}^2
\leq 2\min\Bigl\{\frac{1}{\lambda},\,1\Bigr\}\sum_{j=1}^np_{j}^2.
\end{equation}
The upper bounds of $\norm{F-G}$ are 
due to \citet[Theorem~1]{MR755837}, who used Stein's 
method to improve results of \citet[Theorem 2]{MR0142174},
\citet[formula (1) on page 174]{MR0165555}
and \citet[formula (4.23)]{MR0428387}. In their Theorem~2, they also 
showed a comparable lower bound with constant $\frac{1}{16}$ instead 
of $\frac{1}{7}$. The lower bound with the better constant was 
mentioned in Remark 3.2.2 of \citet{MR1163825}. 
The estimates in (\ref{bh}) depend on the behavior of the so-called 
magic  
factor $\frac{1}{\lambda}$ (cf.\ Introduction in \cite{MR1163825}) 
and 
on the smallness of all~$p_j$, $j\in\set{n}$, which is reflected 
by~$\sum_{j=1}^np_j^2$. It is easily seen that the leading constant 
$2$ in front of $\theta:=\frac{1}{\lambda}\sum_{j=1}^np_j^2$, resp.\ 
in front of $\lambda\theta$, in the upper bound (\ref{bh}) is 
optimal. 
However, formula (32) in \citet{MR1735783} implies that 
\begin{align}\label{e458476}
\Ab{\norm{F-G}-\sqrt{\frac{2}{\pii\ee}}\,\theta}
\leq C\,\theta\,\min\Bigl\{1,\,\frac{1}{\sqrt{\lambda}}+\theta\Bigr\},
\end{align}
where $C$ denotes an absolute constant. 
In particular, this implies that  
$\norm{F-G}
\sim\sqrt{\frac{2}{\pii\ee}}\,\theta$ as $\theta\to0$ and 
$\lambda\to\infty$.
Here,~$\sim$ means that the quotient of both sides tends to one.
We note that the bound (\ref{e458476}) is a generalization, resp.\
refinement, of results of \citet[Theorem 2]{MR0056861} and 
\citet[Theorem 1.2]{MR832029}; see also \citet[page 2]{MR1163825}. 
In \citet[formula (30)]{MR2251442}, it was shown that, in the case 
$\theta<1$,  
\begin{align}\label{e732762543}
\norm{F-G}
\leq \frac{3\theta}{2\ee(1-\sqrt{\theta})^{3/2}},
\end{align}
which is an improvement of formula (10) in \citet{MR1841404}. 
The more general Theorem~1, resp.\ Corollary~1, of the latter paper 
implies the sharpness of the constant $\frac{3}{2\ee}$. In fact, we 
have 
\begin{align}\label{e352436}
\lim_{t\downarrow0}\Bigl(\sup\frac{1}{\theta}\norm{F-G}\Bigr)
=\frac{3}{2\ee},
\end{align}
where the $\sup$ is taken over all $n\in\NN$, 
$p_1,\dots,p_n\in[0,1]$ such that $\lambda=\sum_{j=1}^np_j>0$ and 
$\theta=\frac{1}{\lambda}\sum_{j=1}^np_j^2\leq t$
(or, alternatively, such that $\max_{j\in\set{n}}p_j\leq t$). 
\subsubsection{The multi-dimensional case $d\in\NN$.}
Here 
\begin{align*}
F=\prod_{j=1}^n\Bigl(\dirac_0
+p_j\sum_{r=1}^dq_{j,r}(\dirac_{\unitvec{r}}-\dirac_0)\Bigr)
\end{align*}
is a generalized multinomial distribution, which we wish to 
approximate by a product of Poisson distributions 
\begin{align*}
G=\CPo(\lambda,Q)
=\exp\Bigl(\sum_{r=1}^d\lambda_r(\dirac_{\unitvec{r}}-\dirac_0)\Bigr)
=\bigotimes_{r=1}^d\exp(\lambda_r(\dirac_{1}-\dirac_0))
=\bigotimes_{r=1}^d\Po(\lambda_r),
\end{align*}
i.e.\ $G$ is a multivariate Poisson distribution with mean vector 
$(\lambda_1,\dots,\lambda_d)$. In this context, there are two papers 
by \citet{MR0160279} and U. \citet{MR0179820}, 
which unfortunately have been largely overlooked in subsequent 
publications. Both papers considered more general convolution 
factors. Under our assumptions, some of the results are as follows.
\citet[formula~(1) on page~102]{MR0160279} used direct 
calculations to show a multivariate version of Proposition 1
of \citet[]{MR0142174}. His inequality  reads as
\begin{align}\label{e24648}
\norm{F-G}
\leq 2\sum_{j=1}^np_j^2
\end{align} 
and was later rediscovered by \citet[Theorem 1]{MR595721} using 
coupling arguments. 
We note that \citet[formulas (2) and (3) on page 102]{MR0160279} also 
proved two bounds for the point metric; 
one of these however can, under the present 
assumptions, be replaced by a bound of a better order,
cf.\ \citet[Theorem~2]{MR1679016}.
U.\ \citet[formula (0) on page 18]{MR0179820} proved a bound 
containing a magic factor by using the method of \citet{MR0165555}: 
If $\max_{j\in\set{n}}p_j\leq\frac{1}{4}$, then  
\begin{align}\label{e73457}
\norm{F-G}
\leq 9\sum_{j=1}^np_j^2
\Bigl(\sum_{r=1}^d\frac{q_{j,r}}{\sqrt{\lambda_r}}\Bigr)^2.
\end{align} 
Consequently, in view of (\ref{bh}), we see that, in order to obtain 
a new bound, which is of the right order in the case $d=1$, one could 
simply take the minimum of the right-hand sides of (\ref{e24648}) 
and (\ref{e73457}). But, as is shown below, it is possible to get 
bounds having a better structure concerning the minimum term. 
Indeed, the following interesting bound containing a magic factor was 
shown by \citet[Theorem 1]{MR974580} using Stein's method:
\begin{align}\label{e9488676}
\norm{F-G}
\leq 2\sum_{j=1}^np_j^2\min\Bigl\{c_\lambda\sum_{r=1}^d
\frac{q_{j,r}^2}{\lambda_r},\,1\Bigr\},
\end{align}
where $c_\lambda=\frac{1}{2}+\max\{\log(2\lambda),0\}$. 
Unfortunately, the term $c_\lambda$ is logarithmically 
increasing in $\lambda$ and therefore the upper bound in 
(\ref{e9488676}) does not have the correct order in the case $d=1$, 
see (\ref{bh}). 

An improvement of (\ref{e9488676}) without logarithmic factor 
was shown in \citet[Theorem 1]{MR1701409} using some modifications in 
the method of \citet{MR0165555}. Let 
\begin{gather}
g(z)=\frac{2\ee^z}{z^2}(\ee^{-z}-1+z)
=2\sum_{m=2}^\infty \frac{m-1}{m!}z^{m-2}, \quad(z\in\CC),
\label{e86576476}\\
\alpha_0=\sum_{j=1}^n g(2p_j)p_j^2\min\Bigl\{\frac{1}{2^{3/2}}
\sum_{r=1}^d\frac{q_{j,r}^2}{\lambda_r},\,1\Bigr\},\quad 
\beta_0=\sum_{j=1}^n p_j^2\min\Bigl\{\sum_{r=1}^d\frac{q_{j,r}^2}{
\lambda_r},\,1\Bigr\}.\label{e96811}
\end{gather}
We note that
\begin{align}\label{e23976}
1\leq g(x)\leq \ee^x \quad (x\in[0,\infty)),\qquad 
\max_{j\in\set{n}}g(2p_j)\leq g(2)\leq 4.195.
\end{align}
If $\alpha_0<\frac{1}{2\,\ee}$, then
\begin{equation}\label{e2345876}
\norm{F-G}
\leq \frac{2\alpha_0}{1-2\,\alpha_0\,\ee}.
\end{equation}
The following estimate is valid without any restrictions:
\begin{equation}\label{e2345877}
\norm{F-G}\leq 17.6\,\beta_0.
\end{equation}
It is clear that (\ref{e2345876}) or (\ref{e2345877}) 
should be preferred over (\ref{e73457}), because of the term 
$\sum_{r=1}^d\frac{q_{j,r}^2}{\lambda_r}$ in the representations of 
$\alpha_0$ and $\beta_0$. Indeed, if 
$q_{j,r}=\frac{1}{d}$ for all $j\in\set{n}$ and $r\in\set{d}$, then 
$\lambda_1=\dots=\lambda_d$ and hence
$(\sum_{r=1}^d\frac{q_{j,r}}{\sqrt{\lambda_r}})^2
=\frac{1}{\lambda_1}
=d\sum_{r=1}^d\frac{q_{j,r}^2}{\lambda_r}$, so 
that the difference in the order is the factor $d$, if we consider
the first entry in the minimum terms in (\ref{e96811}).
On the other hand, for a precise comparison of (\ref{e2345876}) 
with (\ref{e73457}), let us assume that 
$\max_{j\in\set{n}}p_j\leq\frac{1}{4}$ and that
$\gamma:=\sum_{j=1}^np_j^2
(\sum_{r=1}^d\frac{q_{j,r}}{\sqrt{\lambda_r}})^2
<\frac{2}{9}$, such that the right-hand side 
of (\ref{e73457}) is smaller than the trivial bound $2$. 
If we now use the crude estimate
$2\alpha_0\leq g(\frac{1}{2})\frac{\gamma}{\sqrt{2}}$, 
then, since $\gamma\leq \frac{2}{9}$, (\ref{e2345876}) implies the 
bound $\frac{5}{2}\gamma$, which is better than the one in 
(\ref{e73457}).

In the present paper, among other results, we show the following 
further improvement of (\ref{e2345876}) and (\ref{e2345877}).
\begin{theorem}\label{t721649}
Let the function $g$ be defined as in \textup{(\ref{e86576476})}. 
Write 
\begin{align}\label{e38274}
\alpha_1=\sum_{j=1}^ng(2p_j)p_j^2\sum_{r=1}^dq_{j,r}
\min\Bigl\{\frac{q_{j,r}}{2^{3/2}\lambda_r},\,2\Bigr\},\quad
\beta_1=\sum_{j=1}^np_j^2\sum_{r=1}^dq_{j,r}
\min\Bigl\{\frac{q_{j,r}}{\lambda_r},\,1\Bigr\}. 
\end{align}
If $\alpha_1<\frac{1}{2^{3/2}}$, then 
\begin{align}\label{e7465293}
\norm{F-G}\leq \frac{2\alpha_1}{1-2^{3/2}\alpha_1}. 
\end{align}
Without any restrictions, we have
\begin{align}\label{e7153547}
\norm{F-G}\leq  15.6\,\beta_1.
\end{align}
\end{theorem}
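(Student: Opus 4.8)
The plan is to work entirely in the Banach algebra $\calm$ and to follow the method of \cite{MR0165555}, exploiting that the approximating measure factorizes into invertible compound Poisson factors. Writing $W_j=Q_j-\dirac_0$ and $G_j=\exp(p_jW_j)=\bigotimes_{r=1}^d\Po(p_jq_{j,r})$, the identity $\lambda(Q-\dirac_0)=\sum_{j=1}^np_jW_j$ gives $G=\prod_{j=1}^nG_j$, while $F=\prod_{j=1}^nF_j$ with $F_j=\dirac_0+p_jW_j$. Since each $G_j$ is invertible in $\calm$ with $G_j^{-1}=\exp(-p_jW_j)$, the first step is to record the expansion built from the ``defects''
\begin{align*}
H_j=F_jG_j^{-1}-\dirac_0=(\dirac_0+p_jW_j)\exp(-p_jW_j)-\dirac_0
=-\frac{(p_jW_j)^2}{2}\,g(-p_jW_j),
\end{align*}
where the last equality uses the power series (\ref{e86576476}). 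The decisive feature is that $H_j$ vanishes to second order, so that $\norm{H_j}\leq\frac{(2p_j)^2}{2}g(2p_j)=2p_j^2g(2p_j)$, because $\norm{W_j}\leq2$ and $g$ has non-negative coefficients; this is the source of the factors $g(2p_j)p_j^2$ in $\alpha_1$ and of $p_j^2$ in $\beta_1$. Expanding the resulting telescoping product then yields
\begin{align*}
F-G=\Bigl(\prod_{j=1}^n(\dirac_0+H_j)-\dirac_0\Bigr)G
=\sum_{\emptyset\neq S\subseteq\set{n}}\Bigl(\prod_{j\in S}H_j\Bigr)G,
\end{align*}
which is the expansion on which the proof rests.

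The heart of the matter, and where I expect the real work to lie, is the estimation of the norm terms $\norm{(\prod_{j\in S}H_j)G}$ with the refined per-coordinate structure visible in (\ref{e38274}). The naive route---expanding each $W_j=\sum_{r=1}^dq_{j,r}(U_r-\dirac_0)$, applying the triangle inequality term by term, and bounding each $\norm{(U_r-\dirac_0)^{\ell}\Po(\lambda_r)}$ separately---only reproduces the coarser bound (\ref{e73457}), since the cross terms in distinct coordinates then assemble into a squared sum $(\sum_rq_{j,r}/\sqrt{\lambda_r})^2$ rather than a single sum. The improvement must instead come from a genuine smoothness inequality that captures the cancellation between coordinates. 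Concretely, I would use that for $V\in\calm$ with $VG\ll G$ one has, by the Cauchy--Schwarz inequality (and since $G$ is a probability measure),
\begin{align*}
\norm{VG}=\Norm{\frac{\dd(VG)}{\dd G}}_{L^1(G)}
\leq\Norm{\frac{\dd(VG)}{\dd G}}_{L^2(G)},
\end{align*}
and then evaluate the $L^2(G)$-norm through the multivariate Charlier polynomials, which are orthogonal with respect to $G=\bigotimes_{r=1}^d\Po(\lambda_r)$ with explicitly known squared norms $\prod_{r=1}^dm_r!/\lambda_r^{m_r}$. Because contributions attached to distinct coordinates land on orthogonal Charlier functions, their squares (not their absolute values) add, so the cross terms accumulate as $\sum_r$ rather than as $(\sum_r)^2$; taking, in each coordinate separately, the smaller of this smoothed estimate and the trivial bound coming from $\norm{U_r-\dirac_0}=2$ produces exactly the coordinatewise minima $\min\{q_{j,r}/(2^{3/2}\lambda_r),2\}$ inside the sum over $r$. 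Carrying out this computation uniformly over all index sets $S$, and tracking the constants $2^{3/2}$ and $2$ arising from $\norm{W_j}\leq2$ and from $\sqrt{2!}$, is the technical crux.

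With the smoothness inequality established, the assembly is routine. Each $\norm{(\prod_{j\in S}H_j)G}$ factorizes, up to a combinatorial constant depending only on $\card{S}$, into a product over $j\in S$ of contributions $g(2p_j)p_j^2\sum_rq_{j,r}\min\{q_{j,r}/(2^{3/2}\lambda_r),2\}$. Summing over all $S$ with a fixed value of $\card{S}$ and then over that value reorganizes the series as a geometric series with ratio $2^{3/2}\alpha_1$; under the hypothesis $\alpha_1<2^{-3/2}$ it converges and sums to (\ref{e7465293}). For the unrestricted bound (\ref{e7153547}), I would relate $\alpha_1$ to $\beta_1$ via $g(2p_j)\leq g(2)\leq4.195$ and the elementary comparison of the two minima, apply the conditional estimate while $\beta_1$ stays small, and invoke the trivial bound $\norm{F-G}\leq\norm{F}+\norm{G}=2$ once $\beta_1$ exceeds a threshold; optimizing the split yields the explicit constant $15.6$.
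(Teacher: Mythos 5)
Your treatment of the conditional bound (\ref{e7465293}) follows the paper's own route: your $H_j$ is exactly the paper's $V_j=F_j\ee^{-R_j}-\dirac_0=-\frac{g(-R_j)}{2}R_j^2$, your expansion $F-G=\sum_{\emptyset\neq S\subseteq\set{n}}\bigl(\prod_{j\in S}H_j\bigr)G$ is its identity $F-G_0=\sum_{k=1}^nH_k$, and your $L^2(G)$/Charlier-orthogonality estimate is precisely the mechanism of Lemma \ref{l3519219}. Be aware, however, that the coordinatewise minimum does \emph{not} fall out of the $L^2$ computation ``coordinate by coordinate'': the paper must split each $R_j=R_j'+R_j''$ according to whether $q_{j,r}/\lambda_r$ lies below or above a threshold, bound the rough part $R_j''$ in total variation, and control the mixed products via a mixed-moment Charlier estimate combined with the factorial inequality $(2m_1+m_2)!\leq((2k)!)^{m_1/k}((2k-1)!)^{m_2/(2k)}$ (Lemmata \ref{l46278945} and \ref{l47384}). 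You correctly identify this as the crux but leave it unexecuted; with it in place, your geometric-series assembly does deliver (\ref{e7465293}), since the prefactor $\sqrt{(2k)!}/(k!\,2^k)$ is decreasing in $k$ and equals $\sqrt{2}/2$ at $k=1$.

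The genuine gap is your derivation of (\ref{e7153547}). The best generic comparison of the two minima is $\min\{x/2^{3/2},\,2\}\leq 2\min\{x,\,1\}$, with ratio $2$ attained for $x\geq2^{5/2}$, whence $\alpha_1\leq 2g(2)\beta_1\leq 8.39\,\beta_1$. Feeding this into (\ref{e7465293}) and cutting off at the trivial bound $2$ puts the crossover at $\frac{16.78\,\beta_1}{1-23.73\,\beta_1}=2$, i.e.\ $\beta_1\approx0.031$, so the optimized split yields a constant of about $64$, not $15.6$. The paper does not take this route at all: it bounds each $\norm{H_k}$ directly by $D_k'\beta_1^k$ via Corollary \ref{c6645838} (whose threshold inside the minimum is $1$ rather than $2^{5/2}$, thanks to the choice $w_j=4$), and --- decisively --- it replaces the generic $k=1$ constant $D_1\,g(2)/2\approx 9.1$ by $D_1'=3.11$ through the separate Lemma \ref{l261947}, which exploits the integral representation $(\dirac_0+R)\ee^{-R}-\dirac_0=-\int_0^1tR^2\exp(-tR)\,\dd t$ (convolved with $G$) and the monotonicity of $f(x)=x\log(1+\frac{1}{x-1})-1$. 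Only then does minimizing $\min\{h_1(\beta_1),\,2\}$ with $h_1(x)=\sum_{k\geq1}D_k'x^k$ give $x_0\approx0.1283$ and $c_0=2/x_0\leq15.6$; even using the generic $k=1$ constant in this scheme one only reaches roughly $18$. So your plan for the unconditional bound, as stated, cannot produce the claimed constant: the missing idea is the dedicated refined smoothness estimate for the single-defect term $H_1$.
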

\begin{remark} \label{r637865} 
\hspace{2em} 
\begin{enumerate}

\item[(a)] 
Let us explain the bounds in Theorem \ref{t721649} with the help of 
random variables. We assume the notation as given above. 
Furthermore, for $j\in\set{n}$, let $X_j=(X_{j,1},\dots,X_{j,d})$ be 
$d$-dimensional independent Bernoulli random vectors 
with $P(X_j=(0,\dots,0))=1-p_j$ and $P(X_j=\unitvec{r})=p_jq_{j,r}$
for $r\in\set{d}$. 
Let $T=(T_1,\dots,T_d)$, where $T_r$, $(r\in\set{d})$ are independent 
one-dimensional Poisson $\Po(\lambda_r)$ distributed random variables.
Let $P^{S_n}$ and $P^{T}$ denote the distribution of 
$S_n=(S_{n,1},\dots,S_{n,d})=\sum_{j=1}^n X_j$ and $T$, respectively. 
Then $F=P^{S_n}$, $G=P^T$ and
\begin{align*}
\qquad\quad \dtv(P^{S_n},P^T)\leq \frac{\alpha_1}{1-2^{3/2}\alpha_1},
\mbox{ if } \alpha_1<\frac{1}{2^{3/2}}; \quad 
\dtv(P^{S_n},P^T)\leq 7.8\,\beta_1.
\end{align*}

\item[(b)] The structure of the term  $\beta_1$ is better than that 
of $\beta_0$, since we always have 
$\beta_1\leq \beta_0$ and there are examples in which 
$\beta_1$ is significantly smaller than $\beta_0$ 
(see Example~\ref{ex632434}). In particular, (\ref{e7153547}) is 
always better than (\ref{e2345877}). It should be mentioned that, 
if $q_{1,r}=\dots=q_{n,r}$ for all $r\in\set{d}$, 
then $\lambda_r=q_{1,r}\lambda$ for all $r\in\set{d}$ and 
$\beta_1=\min\{\frac{1}{\lambda},\,1\}\sum_{j=1}^np_j^2
=\beta_0$. 
Similarly, the structure of $\alpha_1$ is better 
than  that of $\alpha_0$. However,  $\alpha_1$ is not always smaller 
than or equal to $\alpha_0$, since $\alpha_1$ contains an additional 
factor $2$ in the second entry of the minimum term. 

\item[(c)] In practical applications, (\ref{e7465293}) often leads 
to smaller values than (\ref{e7153547}). 

\item[(d)] Generally, an inequality 
$\norm{F-G}
\leq C\,d^{c}\beta_1'$
with 
\begin{align*}
\beta_1'=\sum_{j=1}^np_j^2\sum_{r=1}^d q_{j,r}^2
\min\Bigl\{\frac{1}{\lambda_r},\,1\Bigr\}
\end{align*}
and absolute constants $C\in(0,\infty)$ and $c\in[0,1)$ cannot 
hold, see the remark after Corollary~1 in \citet{MR1679016}. 
Consequently, there is no hope of a bound of order 
$\beta_1'$. 

\item[(e)]
In view of (\ref{e86576476}), we see that, 
if $j\in\set{n}$ and $p_j$ is small, then $g(2p_j)\approx1$.
Hence, if $\alpha_1$ and $\max_{j\in\set{n}}p_j$  are small, then 
\begin{align*}
\norm{F-G}\leq c \sum_{j=1}^np_j^2\sum_{r=1}^dq_{j,r}
\min\Bigl\{\frac{q_{j,r}}{2^{3/2}\lambda_r},\,2\Bigr\}
\end{align*}
with $c\approx 2$. In (\ref{e7153547}), the factor $15.6$ cannot 
be replaced by a constant smaller than $2$, which follows from the 
remark after (\ref{bh}). Relation (\ref{e352436}) implies that 
(\ref{e7465293}) cannot generally hold when 
the factor $\frac{1}{2^{3/2}}$ in the 
representation of $\alpha_1$ (see (\ref{e38274})) is replaced
by a constant smaller than~$\frac{3}{4\ee}$. 

\item[(f)] All upper bounds in (\ref{bh}), (\ref{e732762543}), 
(\ref{e24648}), (\ref{e73457}), (\ref{e9488676}), (\ref{e2345876}), 
(\ref{e2345877}), (\ref{e7465293}) and (\ref{e7153547}) remain valid, 
if, in the definition of $F$ and $G$, we generalize $U_r$ to 
$U_r\in\calf$ for $r\in\set{d}$, which follows from the definition of 
the total variation norm, see, e.g., \citet
[page~187]{MR0199871} or \citet[page~167]{Michel1987}.

\end{enumerate}
\end{remark}
\noindent
The next proposition provides lower bounds in the multi-dimensional 
case. 
\begin{proposition}\label{p47296}
Let $J\subseteq\set{d}$, $y_j=\sum_{r\in J}q_{j,r}$, 
$\widetilde{p}_j=p_jy_j$ for all $j\in\set{n}$ and 
$\widetilde{\lambda}=\sum_{j=1}^n\widetilde{p}_j$. Then 
\begin{align}\label{e527}
\norm{F-G}
\geq \Norm{\prod_{j=1}^n(\dirac_0+\widetilde{p}_j(\dirac_1-\dirac_0))
-\Po(\widetilde{\lambda})}
\geq \frac{1}{7}\min\Bigl\{\frac{1}{\widetilde{\lambda}},\,1\Bigr\}
\sum_{j=1}^n\widetilde{p}_{j}^2.
\end{align}
In particular,
\begin{align}\label{e787329}
\norm{F-G}
&\geq \frac{1}{7}\min\Bigl\{\frac{1}{\lambda},\,1\Bigr\}
\sum_{j=1}^np_{j}^2
\end{align}
and 
\begin{align}\label{e76285}
\norm{F-G}\geq\frac{1}{7}\max_{r\in\set{d}}
\Bigl(\min\Bigl\{\frac{1}{\lambda_r},1\Bigr\}
\sum_{j=1}^np_j^2q_{j,r}^2\Bigr).
\end{align} 
\end{proposition}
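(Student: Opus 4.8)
The plan is to reduce the $d$-dimensional estimate to the one-dimensional lower bound in \textup{(\ref{bh})} by pushing all the measures forward along a single coordinate-collapsing homomorphism. For the chosen $J\subseteq\set{d}$, consider the linear (hence Borel measurable) map $\phi\colon\RR^d\to\RR$, $\phi(x)=\sum_{r\in J}x_r$, which is a homomorphism of $(\RR^d,+)$ into $(\RR,+)$ with $\phi(0)=0$. For $V\in\calm$ let $\phi_*V\in\calm$ denote the image measure, $\phi_*V(A)=V(\phi^{-1}(A))$. Two properties of $\phi_*$ drive the argument. First, $\phi_*$ is a contraction for the total variation norm: from $\ab{\phi_*V(A)}=\ab{V(\phi^{-1}(A))}\leq\tvm{V}(\phi^{-1}(A))=(\phi_*\tvm{V})(A)$ one gets $\tvm{\phi_*V}\leq\phi_*\tvm{V}$, whence $\norm{\phi_*V}\leq\norm{V}$. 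Second, since $\phi$ is a homomorphism, $\phi_*$ is multiplicative for convolution, $\phi_*(VW)=(\phi_*V)(\phi_*W)$; this follows from the image-measure change-of-variables formula after noting that $x+y\in\phi^{-1}(A)$ is equivalent to $\phi(x)+\phi(y)\in A$. By linearity and by the continuity of $\phi_*$ these extend to $\phi_*\prod_{j=1}^nF_j=\prod_{j=1}^n\phi_*F_j$ and to $\phi_*\exp(W)=\exp(\phi_*W)$.

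Next I would compute the images explicitly. Since $\phi(\unitvec{r})=1$ for $r\in J$ and $\phi(\unitvec{r})=0$ otherwise, we have $\phi_*U_r=\dirac_1$ for $r\in J$ and $\phi_*U_r=\dirac_0$ for $r\notin J$. Hence $\phi_*Q_j-\dirac_0=y_j(\dirac_1-\dirac_0)$ and
\begin{align*}
\phi_*F_j=\dirac_0+p_j(\phi_*Q_j-\dirac_0)=\dirac_0+\widetilde{p}_j(\dirac_1-\dirac_0),
\end{align*}
so that $\phi_*F=\prod_{j=1}^n(\dirac_0+\widetilde{p}_j(\dirac_1-\dirac_0))$. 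For $G$, using $\sum_{r\in J}\lambda_r=\sum_{j=1}^np_jy_j=\widetilde{\lambda}$ one finds $\lambda(\phi_*Q-\dirac_0)=\widetilde{\lambda}(\dirac_1-\dirac_0)$, and therefore $\phi_*G=\exp(\widetilde{\lambda}(\dirac_1-\dirac_0))=\Po(\widetilde{\lambda})$. Combining these identities with the contraction property yields the first inequality in \textup{(\ref{e527})}:
\begin{align*}
\norm{F-G}&\geq\norm{\phi_*(F-G)}=\norm{\phi_*F-\phi_*G}\\
&=\Norm{\prod_{j=1}^n(\dirac_0+\widetilde{p}_j(\dirac_1-\dirac_0))-\Po(\widetilde{\lambda})}.
\end{align*}

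The second inequality in \textup{(\ref{e527})} is then precisely the one-dimensional lower bound in \textup{(\ref{bh})} applied to the Bernoulli convolution with parameters $\widetilde{p}_1,\dots,\widetilde{p}_n\in[0,1]$ (note $y_j\leq\sum_{r=1}^dq_{j,r}=1$) and mean $\widetilde{\lambda}$; the case $\widetilde{\lambda}=0$, i.e.\ $J=\emptyset$, is trivial since both sides vanish. Finally, \textup{(\ref{e787329})} follows by taking $J=\set{d}$, where $y_j=1$, $\widetilde{p}_j=p_j$ and $\widetilde{\lambda}=\lambda$, whereas \textup{(\ref{e76285})} follows by taking $J=\{r\}$ for each fixed $r\in\set{d}$, which gives $\widetilde{p}_j=p_jq_{j,r}$ and $\widetilde{\lambda}=\lambda_r$, and then maximizing the resulting bounds over $r\in\set{d}$.

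I expect the main obstacle to lie not in the computations but in the careful justification of the two functorial properties of $\phi_*$ in the present measurable-semigroup setting: the multiplicativity $\phi_*(VW)=(\phi_*V)(\phi_*W)$, which must be reduced to the homomorphism property $\phi(x+y)=\phi(x)+\phi(y)$ together with the measurability hypotheses on $(\frakX,+,\cala)$, and the passage from finite products to the exponential series, which requires the continuity of $\phi_*$ with respect to the total variation norm. Everything else is a direct substitution into the definitions.
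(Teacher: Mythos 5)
Your proof is correct and takes essentially the same route as the paper: the paper also reduces to the one-dimensional lower bound in (\ref{bh}) by collapsing the coordinates in $J$, phrasing the projection probabilistically via $\norm{F-G}=2\sup_{A\subseteq\Zpl^d}\ab{P(S_n\in A)-P(T\in A)}\geq 2\sup_{B\subseteq\Zpl}\ab{P(\sum_{r\in J}S_{n,r}\in B)-P(\sum_{r\in J}T_r\in B)}$ and noting that $\sum_{r\in J}S_{n,r}$ and $\sum_{r\in J}T_r$ have exactly the Bernoulli-convolution and $\Po(\widetilde{\lambda})$ laws. In that formulation the functorial properties of $\phi_*$ that you flag as the main obstacle are automatic (since $\phi(S_n)=\sum_{j=1}^n\phi(X_j)$ and independent Poisson summands add), though your direct measure-theoretic verification of contraction and multiplicativity is also sound.
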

\noindent 
The second inequality in (\ref{e527}) is taken from (\ref{bh}). 
In the case $J=\set{d}$, 
the first lower bound in (\ref{e527}) is the same as the one in 
\citet[Remark 2.5]{MR935295}, who used a maximal coupling for a 
proof. 
The generalization with arbitrary $J\subseteq\set{d}$ is shown 
analogously. However, in order to keep the paper self-contained,
we give a further simple proof, which avoids the coupling method, 
see Section \ref{s275}.
The bounds in (\ref{e787329}) and (\ref{e76285}) 
follow from (\ref{e527}) with $J=\set{d}$ and 
$J=\{r\}$ for all $r\in\set{d}$, respectively. 
Consequently, the lower bound in (\ref{bh}) still holds in the 
multi-dimensional case.
The bound in (\ref{e76285}) is a slight improvement of
the first inequality in Corollary 1 of \citet{MR1679016}.

Let us compare the bounds in (\ref{e7153547}),  (\ref{e787329}) and
(\ref{e76285}).
\begin{remark}
\begin{enumerate}

\item[(a)] Suppose that, for all $r\in\set{d}$, $a_r,b_r\in(0,1]$
exist, such that $a_r\leq q_{j,r}\leq b_r$
for all $j\in\set{n}$. Then 
$\min\{\frac{1}{\lambda},\,1\}\sum_{j=1}^np_j^2
\geq \frac{\beta_1}{\eta}$ 
with $\eta=\max_{r\in\set{d}}\frac{b_r}{a_r}$. 
Here, the bounds in (\ref{e7153547}) and 
(\ref{e787329}) differ at most by a constant multiple of 
$\frac{1}{\eta}$.  
If $q_{1,r}=\dots=q_{n,r}=a_r=b_r$ for all $r\in\set{d}$, 
then $\eta=1$. We note that, in this case, 
(\ref{e76285}) is worse than (\ref{e787329}). 

\item[(b)] Assume now that $c\in(0,1)$, $\kappa\in[0,\infty)$, 
$d=n$, $p_j=\frac{c}{j^\kappa}$, $q_{j,r}=\bbone_{\{j\}}(r)$ 
for all $j,r\in\set{n}$. 

Let us first assume that $\kappa=1$. Then 
(\ref{e76285}) implies that $\norm{F-G}
\geq \frac{1}{7}\max_{j\in\set{n}}p_j^2=\frac{c^2}{7}$, 
whereas (\ref{e7153547}) gives 
$\norm{F-G}\leq 15.6\sum_{j=1}^n p_j^2\leq 15.6\frac{\pii^2}{6}c^2$. 
Hence, in this case,  (\ref{e7153547}) and (\ref{e76285})
have the same order as $c\to0$. The bound (\ref{e787329}) 
gives $\norm{F-G}
\geq \frac{c^2}{7}\min\{\frac{1}{c\sum_{j=1}^n1/j},1\}
\sum_{j=1}^n\frac{1}{j^2}$, which is worse than (\ref{e76285})
as $n\to\infty$ if $c$ is fixed. 
This together with (a) shows  that the bounds in (\ref{e787329}) and 
(\ref{e76285}) are not comparable in general.

Let us now consider the case $\kappa=0$. Then 
(\ref{e787329}) and (\ref{e76285}) imply that 
$\norm{F-G}\geq \frac{1}{7}\min\{\frac{1}{nc},1\}nc^2$ and 
$\norm{F-G}\geq \frac{c^2}{7}$, respectively, 
whereas (\ref{e7153547}) gives $\norm{F-G}\leq 15.6nc^2$,
having a different order as $n\to\infty$ if $c$ is fixed.
\end{enumerate}
\end{remark}
\section{Main results}\label{mainres}
\begin{theorem}\label{t759375}
Let $d,n\in\NN$ and $\ell\in\setn{n}$. For $j\in\set{n}$ and 
$r\in\set{d}$, let 
\begin{gather*}
p_j,q_{j,r}\in[0,1] \mbox{ with } 
\sum_{r=1}^dq_{j,r}=1 \mbox{ and }
\lambda_r=\sum_{j=1}^np_jq_{j,r}>0,\quad
\lambda=\sum_{j=1}^np_j,\quad
U_r\in\calf,\\
Q_j=\sum_{r=1}^dq_{j,r}U_r,\quad
Q=\frac{1}{\lambda}\sum_{j=1}^np_jQ_j,\quad
R_j=p_j(Q_j-\dirac_0), \quad
F_j=\dirac_0+R_j, \\
V_j=F_j\ee^{-R_j}-\dirac_0,\quad F=\prod_{j=1}^nF_j.
\end{gather*}
For $k\in\setn{n}$, let
\begin{align*}
M_k&=\sum_{J\subseteq\set{n}:\,\card{J}=k}\prod_{j\in J} V_j,\qquad 
H_k=M_k\exp(\lambda(Q-\dirac_0))
\end{align*}
and set $G_\ell=\sum_{k=0}^\ell H_k$. 
Let the function $g$ be defined as in \textup{(\ref{e86576476})}. 
Write 
\begin{align*}
\alpha_1&=\sum_{j=1}^ng(2p_j)p_j^2\sum_{r=1}^dq_{j,r}
\min\Bigl\{\frac{q_{j,r}}{2^{3/2}\lambda_r},\,2\Bigr\}.
\end{align*}
If $\alpha_1<\frac{1}{2^{3/2}}$, then 
\begin{align}\label{e5254745}
\norm{F-G_\ell}
\leq \frac{\sqrt{(2(\ell+1))!}}{(\ell+1)!}2^{(\ell+1)/2}
\frac{\alpha_1^{\ell+1}}{1-2^{3/2}\alpha_1}. 
\end{align}
\end{theorem}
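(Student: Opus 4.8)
The plan is to combine an exact algebraic expansion of $F$ with a Cauchy--Schwarz/Charlier-polynomial smoothness estimate, and then sum a geometric series. First I would record the algebraic identity underlying the definition of the $H_k$. Since $\ee^{R_j}=\CPo(p_j,Q_j)$ and $V_j=F_j\ee^{-R_j}-\dirac_0$, we have $F_j=(\dirac_0+V_j)\ee^{R_j}$ for every $j\in\set{n}$. As $\calm$ is a commutative Banach algebra, this gives
\[
F=\prod_{j=1}^nF_j=\Bigl(\prod_{j=1}^n(\dirac_0+V_j)\Bigr)\exp\Bigl(\sum_{j=1}^nR_j\Bigr),
\]
and $\sum_{j=1}^nR_j=\sum_{j=1}^np_j(Q_j-\dirac_0)=\lambda(Q-\dirac_0)$. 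Expanding the product, $\prod_{j=1}^n(\dirac_0+V_j)=\sum_{k=0}^nM_k$, so $F=\sum_{k=0}^nH_k$ and hence $F-G_\ell=\sum_{k=\ell+1}^nH_k$.

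The next step reduces everything to a single per-$k$ estimate. Writing $b_k=\frac{\sqrt{(2k)!}}{k!}2^{k/2}\alpha_1^k$, I would aim to prove $\norm{H_k}\leq b_k$ for each $k$. Granting this, the triangle inequality gives $\norm{F-G_\ell}\leq\sum_{k=\ell+1}^nb_k$, and since $\frac{b_{k+1}}{b_k}=\frac{\sqrt{(2k+1)(2k+2)}}{k+1}\,2^{1/2}\alpha_1\leq 2^{3/2}\alpha_1$, the hypothesis $\alpha_1<2^{-3/2}$ makes this a convergent geometric series bounded by $b_{\ell+1}/(1-2^{3/2}\alpha_1)$, which is exactly the right-hand side of (\ref{e5254745}).

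The technical heart is the smoothness estimate for $\norm{(\prod_{j\in J}V_j)\exp(\lambda(Q-\dirac_0))}$. Here I would first split $\exp(\lambda(Q-\dirac_0))=\prod_{r=1}^d\exp(\lambda_r(U_r-\dirac_0))$ and reduce to the lattice case $U_r=\dirac_{\unitvec{r}}$: every object in sight is a real combination of the probability measures $\prod_rU_r^{k_r}$, each of norm $1$, with coefficients equal to the values of the corresponding lattice measure on $\RR^d$, so its total variation norm is at most that of the $\RR^d$-analogue. On $\RR^d$ the density of $\prod_r(\dirac_{\unitvec{r}}-\dirac_0)^{a_r}\bigotimes_r\Po(\lambda_r)$ is $\prod_r\po(x_r,\lambda_r)$ times a product of Charlier polynomials, so Cauchy--Schwarz against the Poisson weights together with the orthogonality relation $\sum_{x}\po(x,\lambda_r)C_{a}(x;\lambda_r)^2=a!/\lambda_r^{a}$ yields the magic-factor bound $(\sum_rq_{j,r}^2/\lambda_r)^{1/2}$ for a single factor $(Q_j-\dirac_0)$, and a bound carrying $\sqrt{m!}$ for an $m$-fold product. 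Capping each coordinate by the trivial estimate $\norm{U_r-\dirac_0}\leq 2$ when $\lambda_r$ is too small produces the minimum $\min\{q_{j,r}/(2^{3/2}\lambda_r),\,2\}$ appearing in $\alpha_1$.

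Finally I would assemble the per-$k$ bound. Expanding $V_j=\sum_{m\geq2}\frac{(-1)^{m+1}(m-1)}{m!}R_j^m$ exhibits each $V_j$ as order $\geq2$ in $p_j$; inserting this into $M_k=\sum_{J:\,\card{J}=k}\prod_{j\in J}V_j$ and applying the smoothness estimate factor by factor, the series defining $g$ in (\ref{e86576476}) is precisely the one that resums the contributions $\sum_{m\geq2}\frac{m-1}{m!}(2p_j)^m$ into the per-factor weight $c_j:=g(2p_j)p_j^2\sum_rq_{j,r}\min\{q_{j,r}/(2^{3/2}\lambda_r),\,2\}$, while the $k$-fold Cauchy--Schwarz supplies the combinatorial constant $\sqrt{(2k)!}\,2^{k/2}$; using $\sum_{J:\,\card{J}=k}\prod_{j\in J}c_j\leq\frac{1}{k!}\bigl(\sum_{j=1}^nc_j\bigr)^k$ turns $\sum_jc_j$ into $\alpha_1$ and gives $\norm{H_k}\leq b_k$. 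The hard part will be the smoothness estimate with sharp constants --- in particular the combinatorial (AM--QM type) inequality that lets the $\sqrt{m!}$ coming from Charlier orthogonality factor across the $k$ factors and recombine into $\sqrt{(2k)!}\,2^{k/2}$, and tracking the minimum uniformly across coordinates; by comparison the numerical bookkeeping behind the constant is routine.
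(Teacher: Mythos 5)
Your proposal is correct and follows essentially the same route as the paper's own proof: the factorization $F_j=(\dirac_0+V_j)\ee^{R_j}$ giving $F-G_\ell=\sum_{k=\ell+1}^nH_k$, the per-$k$ bound $\norm{H_k}\leq\frac{\sqrt{(2k)!}}{k!}2^{k/2}\alpha_1^{k}$ obtained from the Charlier-polynomial/Cauchy--Schwarz smoothness estimates (the paper's Lemmata \ref{l3519219} and \ref{l47384}, applied with $u=0$ and $w_j=\frac{1}{\sqrt{2}}$, after writing $V_j=-\frac{g(-R_j)}{2}R_j^2$ and bounding $\norm{g(-R_j)}\leq g(2p_j)$, rather than resumming the series for $V_j$ term by term) together with the multinomial inequality, and the geometric-series summation with ratio at most $2^{3/2}\alpha_1$ are exactly the paper's steps, and your ratio computation and resulting constant match \textup{(\ref{e5254745})}. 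The one piece you explicitly defer --- obtaining the per-coordinate minimum $\min\{q_{j,r}/(2^{3/2}\lambda_r),\,2\}$ inside the smoothness bound --- is precisely where the paper's technical work lies (splitting each $R_j=R_j'+R_j''$ according to whether $\ab{p_{j,r}}/\lambda_r$ is small, controlling the cross terms via norms of mixed products $\bigl(\prod_{j\in J_1}(R_j')^2\bigr)\prod_{j\in J_2}R_j'$ and the factorial-balancing Lemma \ref{l46278945}), and your sketch correctly identifies both the mechanism and the difficulty, so it is an accurate outline rather than a divergent approach.
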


\begin{remark} Consider the assumptions of Theorem \ref{t759375}.
In order to give an alternative formula for $G_\ell$ for the first 
few $\ell\in\setn{n}$, let $\Gamma_k=\sum_{j=1}^nV_j^k$ 
for $k\in\NN$. Then $M_0=\dirac_0$ and, for $k\in\set{n}$, 
Newton's identity (see \cite[A.IV.70, Lemma~4]{MR1080964}) gives
\begin{align*}
M_k=\frac{1}{k}\sum_{j=1}^k(-1)^{j-1}M_{k-j}\Gamma_j.
\end{align*}
In particular, if $n\geq3$, 
\begin{align*}
M_1=\Gamma_1, \quad M_2=\frac{1}{2}(\Gamma_1^2-\Gamma_2), \quad
M_3=\frac{1}{6}(\Gamma_1^3-3\Gamma_1\Gamma_2+2\Gamma_3)
\end{align*}
and consequently
\begin{gather*}
G_0=\exp(\lambda(Q-\dirac_0)), \quad
G_1=(\dirac_0+\Gamma_1)\exp(\lambda(Q-\dirac_0)),\\
G_2=\Bigl(\dirac_0+\Gamma_1
+\frac{1}{2}(\Gamma_1^2-\Gamma_2)\Bigr)\exp(\lambda(Q-\dirac_0)),\\
G_3=\Bigl(\dirac_0+\Gamma_1+\frac{1}{2}(\Gamma_1^2-\Gamma_2)
+\frac{1}{6}(\Gamma_1^3-3\Gamma_1\Gamma_2+2\Gamma_3)
\Bigr)\exp(\lambda(Q-\dirac_0)).
\end{gather*}
We note that, in \cite[formulas (10), (28)]{MR1701409}, the signed 
measure 
\begin{align}\label{e725475}
\Bigl(\dirac_0-\frac{1}{2}\sum_{j=1}^nR_j^2\Bigr)
\exp(\lambda(Q-\dirac_0))
\end{align}
as approximation of $F$ was used. The corresponding total variation 
bound has a somewhat complicated form and is of 
worse order than $\beta_0^2$, the definition of which can be found 
in (\ref{e96811}). In comparison, our signed measure 
\begin{align*}
G_1=\Bigl(\dirac_0+\sum_{j=1}^n(F_j\ee^{-R_j}-\dirac_0)\Bigr)
\exp(\lambda(Q-\dirac_0))
\end{align*}
is slightly more complicated than 
(\ref{e725475}), but gives a total variation bound of 
order~$\alpha_1^2$. 
\end{remark}
\noindent 
In the following result, we present approximation bounds without a 
singularity as in (\ref{e5254745}). 
\begin{theorem}\label{t729587}
Let the notation of Theorem \ref{t759375} be valid. 
Let $D_1'=3.11$ and $D_k'=D_k(\frac{g(2)}{2})^k$ 
for $k\in\NN\setminus\{1\}$, 
where $D_k$ is defined as in Corollary \ref{c6645838} below
(see Table~2). Let 
$h_1(x)=h_{1,\ell}(x)=\sum_{k=\ell+1}^\infty D_k'x^k$, 
$h_2(x)=h_{2,\ell}(x)=2+\sum_{k=1}^\ell D_k'x^k$ for 
$x\in[0,\infty)$. Write
\begin{align*}
\beta_1=\sum_{j=1}^np_j^2\sum_{r=1}^dq_{j,r}
\min\Bigl\{\frac{q_{j,r}}{\lambda_r},\,1\Bigr\}.
\end{align*}
Without any restrictions, we have
\begin{align}\label{e87965}
\norm{F-G_\ell}\leq c_\ell\,\beta_1^{\ell+1},
\end{align}
where $c_\ell=\frac{h_2(x_\ell)}{x_\ell^{\ell+1}}$ and 
$x_\ell\in(0,\infty)$ is the unique positive solution of the equation
$h_1(x_\ell)=h_2(x_\ell)$. In particular, we have 
$c_0\leq15.6$, $c_1\leq113.0$, $c_2\leq633.8$, $c_3\leq3204.8$,
$c_4\leq15945.6$.
\end{theorem}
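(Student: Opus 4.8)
The plan is to reduce $\norm{F-G_\ell}$ to a tail sum $\sum_{k=\ell+1}^n\norm{H_k}$, to insert the per-order estimate $\norm{H_k}\leq D_k'\beta_1^k$, and then to play the two resulting bounds against each other. The starting point is the algebraic identity $F=\sum_{k=0}^nH_k$. I would derive it from $F_j\ee^{-R_j}=\dirac_0+V_j$ together with $\prod_{j=1}^n\ee^{R_j}=\exp(\sum_{j=1}^nR_j)=\exp(\lambda(Q-\dirac_0))$. Since $\calm$ is commutative, writing $\prod_{j=1}^nF_j=\bigl(\prod_{j=1}^nF_j\ee^{-R_j}\bigr)\prod_{j=1}^n\ee^{R_j}$ and expanding the first product into elementary symmetric sums gives
\begin{align*}
F=\Bigl(\prod_{j=1}^n(\dirac_0+V_j)\Bigr)\exp(\lambda(Q-\dirac_0))=\Bigl(\sum_{k=0}^nM_k\Bigr)\exp(\lambda(Q-\dirac_0))=\sum_{k=0}^nH_k.
\end{align*}
Since $G_\ell=\sum_{k=0}^\ell H_k$ and $H_0=\exp(\lambda(Q-\dirac_0))=G$, this yields $F-G_\ell=\sum_{k=\ell+1}^nH_k$.

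Next I would invoke the norm estimate $\norm{H_k}\leq D_k'\beta_1^k$, valid for every $k\in\NN$, which is what Corollary \ref{c6645838} delivers: for $k\geq2$ the smoothness inequalities of Subsection \ref{s5395} (Charlier polynomials combined with the Cauchy--Schwarz inequality) bound $\norm{\prod_{j\in J}V_j\exp(\lambda(Q-\dirac_0))}$ for each $J$ with $\card{J}=k$, and summing the resulting elementary symmetric expression over all such $J$ produces the factor $\beta_1^k$ and the constant $D_k'=D_k(g(2)/2)^k$; the case $k=1$ is handled by a sharper direct computation giving $D_1'=3.11$. From this single estimate two bounds follow. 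Bounding the whole tail,
\begin{align*}
\norm{F-G_\ell}\leq\sum_{k=\ell+1}^n\norm{H_k}\leq\sum_{k=\ell+1}^\infty D_k'\beta_1^k=h_1(\beta_1),
\end{align*}
where $h_1(\beta_1)$ is read as $+\infty$ if the series diverges. Alternatively, writing $F-G_\ell=(F-G)-\sum_{k=1}^\ell H_k$ and using $\norm{F-G}\leq\norm{F}+\norm{G}=2$ (both $F$ and $G$ are probability measures),
\begin{align*}
\norm{F-G_\ell}\leq\norm{F-G}+\sum_{k=1}^\ell\norm{H_k}\leq2+\sum_{k=1}^\ell D_k'\beta_1^k=h_2(\beta_1).
\end{align*}

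Finally I would combine the two bounds by comparing them at their crossing point. Setting $u(x)=h_1(x)/x^{\ell+1}=\sum_{k\geq\ell+1}D_k'x^{k-\ell-1}$ and $v(x)=h_2(x)/x^{\ell+1}=2x^{-\ell-1}+\sum_{k=1}^\ell D_k'x^{k-\ell-1}$, the function $u$ is non-decreasing on $(0,\infty)$ (all exponents are non-negative) while $v$ is strictly decreasing (every exponent $k-\ell-1$ with $k\leq\ell$ is negative), with $v(0^+)=\infty$ and $v(\infty)=0$; hence $u-v$ is strictly increasing and $h_1-h_2$ changes sign exactly once, at the unique $x_\ell$ with $h_1(x_\ell)=h_2(x_\ell)$, equivalently $u(x_\ell)=v(x_\ell)=c_\ell$. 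For $\beta_1\leq x_\ell$ we have $h_1(\beta_1)\leq h_2(\beta_1)$ and $h_1(\beta_1)=u(\beta_1)\beta_1^{\ell+1}\leq u(x_\ell)\beta_1^{\ell+1}=c_\ell\beta_1^{\ell+1}$; for $\beta_1>x_\ell$ we have $h_2(\beta_1)\leq h_1(\beta_1)$ and $h_2(\beta_1)=v(\beta_1)\beta_1^{\ell+1}\leq v(x_\ell)\beta_1^{\ell+1}=c_\ell\beta_1^{\ell+1}$. Either way $\norm{F-G_\ell}\leq c_\ell\beta_1^{\ell+1}$, which is (\ref{e87965}). The explicit constants $c_0\leq15.6,\dots,c_4\leq15945.6$ then follow by solving $h_1(x_\ell)=h_2(x_\ell)$ numerically from the tabulated $D_k$ and evaluating $c_\ell=h_2(x_\ell)/x_\ell^{\ell+1}$.

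The genuinely hard part is not this optimization but the per-order estimate $\norm{H_k}\leq D_k'\beta_1^k$ that feeds it: controlling $\norm{\prod_{j\in J}V_j\exp(\lambda(Q-\dirac_0))}$ requires expanding the compound Poisson factor against Charlier polynomials and using Cauchy--Schwarz to convert the resulting coefficients into the $\beta_1$-structure with the sharp constants $D_k$ of Corollary \ref{c6645838}. A secondary but real difficulty is the rigorous numerics for $x_\ell$: since $h_1$ is an infinite series, one must control its tail (using the growth of $D_k'$, which by Stirling's formula behaves like a geometric factor) in order to pin down $x_\ell$, and hence $c_\ell$, to the stated accuracy. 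Granting Corollary \ref{c6645838}, the remaining steps above are routine.
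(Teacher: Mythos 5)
Your proposal is correct and takes essentially the same route as the paper's proof: the identity $F=\sum_{k=0}^nH_k$ giving $F-G_\ell=\sum_{k=\ell+1}^nH_k$, the per-order estimate $\norm{H_k}\leq D_k'\beta_1^k$ (Lemma \ref{l261947} for $k=1$; Corollary \ref{c6645838} together with $\norm{g(-R_j)}\leq g(2)$ and the polynomial theorem for $k\geq2$), and the comparison of $h_1(\beta_1)$ against $h_2(\beta_1)$ at the unique crossing point $x_\ell$ via the monotonicity of $h_1(x)/x^{\ell+1}$ and $h_2(x)/x^{\ell+1}$ are exactly the paper's steps. The only cosmetic difference is that you bound $\norm{F-G_\ell}\leq\norm{F-G}+\sum_{k=1}^\ell\norm{H_k}$ where the paper uses $\norm{F}+\norm{G_\ell}$, which yields the identical bound $h_2(\beta_1)$.
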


\begin{remark}
Theorem \ref{t721649} is a direct consequence of Theorems 
\ref{t759375} and \ref{t729587} for $\ell=0$.
\end{remark}

\begin{example}\label{ex632434}
In order to compare the bounds in a numerical example, let us 
consider the assumptions of Theorem \ref{t759375}, \ref{t729587}
with  $d=n=1000$ and $p_{j,r}=\frac{10^{-4}}{\ab{j-r}^{1/2}+0.1}$,
$p_j=\sum_{r=1}^dp_{j,r}$ and $q_{j,r}=\frac{p_{j,r}}{p_j}$
for $j,r\in\set{n}$. This implies that 
$\beta_0= 0.081578\dots$, $\beta_1= 0.022183\dots$, 
$\alpha_0= 0.044626\dots$, $\alpha_1= 0.023037\dots$, 
$\lambda=9.01\dots$,
$\max_{j\in\set{n}}p_j= 0.009521\dots$. Here
$\alpha_0$ and $\beta_0$ are as in (\ref{e96811}).
Table 1 below shows that, in the approximation of $F$ by 
$G_0=\exp(\lambda(Q-\dirac_0))$, the bound in (\ref{e7465293})
is the smallest one.  In the approximation 
of $F$ by the signed measure $G_\ell$ for $\ell\in\set{4}$, 
we expect that the accuracy increases as $\ell$ increases. 
Indeed, this is reflected in the bounds as well. 
Furthermore, we see that here (\ref{e5254745}) is better than 
(\ref{e87965}). 
\begin{center}
\begin{tabular}{cc|ccc}
\multicolumn{5}{l}{Table 1: Numerical comparison of the bounds
in Example \ref{ex632434}}\\ 
\hline
\multicolumn{2}{c|}{Approximation by $\exp(\lambda(Q-\dirac_0))$}
& \multicolumn{3}{c}{Approximation by signed meas.\ $G_\ell$, 
$(\ell\in\set{4})$}\\ \hline
number of formula   &  upper bound & 
number of formula & $\ell$ &  upper bound \\ \hline \hline
(\ref{e24648})    & $0.163157$ & (\ref{e5254745}) & 1 & $0.002782$ \\
(\ref{e73457})    & $81.3$ & (\ref{e87965})   & 1 & $0.055608$ \\
(\ref{e9488676})  & $0.163157$ & (\ref{e5254745}) & 2 & 
$0.000166$ \\
(\ref{e2345876})  & $0.117843$ & (\ref{e87965})   & 2 & $0.006919$ \\
(\ref{e2345877})  & $1.435779$ & (\ref{e5254745}) & 3 & 
$0.000011$ \\
(\ref{e7465293})  & $0.049286$ & (\ref{e87965})   & 3 & $0.000777$ \\
(\ref{e7153547})  & $0.346060$ & (\ref{e5254745}) & 4 & 
$6.24\times10^{-7}$ \\
                  &           & (\ref{e87965})   & 4 & $0.000086$ \\ 
\hline
\end{tabular}
\end{center}
We note that the value of the bound in (\ref{e73457}) exceeds by far 
the trivial bound $2$, which however depends on the kind of example. 
The lower bounds in (\ref{e787329}) and (\ref{e76285}) 
give $0.001292$ and $1.60\times10^{-7}$, respectively. 
\end{example}


\begin{remark} \label{r14143}
Let the notation of Theorems \ref{t759375}, \ref{t729587}
be valid and assume that there exist pairwise disjoint sets 
$A_1,\dots,A_d\in\cala$ with
$U_r(\frakX\setminus A_r)=0$ for all $r\in\set{d}$. 
Let $\bbone_{A_r}:\,\frakX\longrightarrow\frakX$ be the indicator 
function of $A_r$. Then, for $j\in\set{n}$, 
$f_j:=\sum_{r=1}^d\frac{\lambda}{\lambda_r}q_{j,r}\bbone_{A_r}$
is a $Q$-density of $Q_j$, since, for $B\in\cala$, we have 
\begin{align*}
\int_B f_j\,\dd Q=\sum_{r=1}^d\int_{B\cap A_r} 
\frac{\lambda}{\lambda_r}q_{j,r}
\,\dd \Bigl(\frac{\lambda_r}{\lambda}U_r\Bigr)
=\sum_{r=1}^dq_{j,r}U_r(B)=Q_j(B).
\end{align*}
Furthermore, for $c\in(0,\infty)$,  
\begin{align*}
\sum_{r=1}^dq_{j,r}
\min\Bigl\{c\frac{q_{j,r}}{\lambda_r},\,1\Bigr\}
=\sum_{r=1}^d\int_{A_r}\frac{\lambda}{\lambda_r}q_{j,r}
\min\Bigl\{c\frac{q_{j,r}}{\lambda_r},\,1\Bigr\}\,\dd Q
=\int f_j\min\Bigl\{c\frac{f_j}{\lambda},\,1\Bigr\}\,\dd Q,
\end{align*}
such that
\begin{align*}
\alpha_1&=\sum_{j=1}^ng(2p_j)p_j^2\int f_j
\min\Bigl\{\frac{f_j}{2^{3/2}\lambda},\,2\Bigr\}\,\dd Q,\quad
\beta_1=\sum_{j=1}^np_j^2\int f_j
\min\Bigl\{\frac{f_j}{\lambda},\,1\Bigr\}\,\dd Q.
\end{align*}
\end{remark}
The next result shows that Theorems \ref{t759375} and \ref{t729587} 
can be generalized using the ideas of Remark \ref{r14143}. 
In fact, the $Q_j$, $(j\in\set{n})$ are now general probability 
measures and the $U_r$ for $r\in\set{d}$ are no longer needed. 
\begin{corollary}\label{c7435328}
Let $n\in\NN$, $\ell\in\setn{n}$. For $j\in\set{n}$, let  
$p_j\in(0,1]$, $Q_j\in\calf$, 
$R_j=p_j(Q_j-\dirac_0)$, $F_j=\dirac_0+R_j$ and 
$V_j=F_j\ee^{-R_j}-\dirac_0$. 
Set $\lambda=\sum_{j=1}^np_j$, $F=\prod_{j=1}^nF_j$, 
$Q=\frac{1}{\lambda}\sum_{j=1}^np_jQ_j$. 
For $j\in\set{n}$, let $f_j$ be a Radon-Nikod\'{y}m 
density of $Q_j$ with respect to $Q$, which exists since 
$Q_j\ll Q$. For $k\in\setn{n}$, let 
\begin{align*}
M_k&=\sum_{J\subseteq\set{n}:\,\card{J}=k}\prod_{j\in J} V_j,\qquad 
H_k=M_k\exp(\lambda(Q-\dirac_0))
\end{align*}
and set $G_\ell=\sum_{k=0}^\ell H_k$. 
Let the function $g$ be defined as in \textup{(\ref{e86576476})}. Set
\begin{align*}
\widetilde{\alpha}_1=\sum_{j=1}^ng(2p_j)p_j^2\int f_j
\min\Bigl\{\frac{f_j}{2^{3/2}\lambda},\,2\Bigr\}\,\dd Q,\quad
\widetilde{\beta}_1=\sum_{j=1}^np_j^2\int f_j
\min\Bigl\{\frac{f_j}{\lambda},\,1\Bigr\}\,\dd Q.
\end{align*}
If $\widetilde{\alpha}_1<\frac{1}{2^{3/2}}$, then 
\begin{align}\label{e5254745b}
\norm{F-G_\ell}
\leq \frac{\sqrt{(2(\ell+1))!}}{(\ell+1)!}2^{(\ell+1)/2}
\frac{\widetilde{\alpha}_1^{\ell+1}}{1-2^{3/2}\widetilde{\alpha}_1}. 
\end{align}
The following bound is generally valid: 
\begin{align}\label{e87965b}
\norm{F-G_\ell}\leq c_\ell\,\widetilde{\beta}_1^{\ell+1},
\end{align}
where the constant $c_\ell$ is the same as in Theorem \ref{t729587}.
\end{corollary}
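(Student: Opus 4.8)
The plan is to derive Corollary \ref{c7435328} from Theorems \ref{t759375} and \ref{t729587} by a discretisation-and-limit argument, with Remark \ref{r14143} serving as the dictionary between the two settings. The point is that the theorems already cover exactly the situation in which the densities $f_j$ are constant on the atoms $A_r$ of a finite partition (this is the piecewise-constant case of Remark \ref{r14143}); the corollary is the extension to arbitrary densities, reached by letting the partition become finer and passing to the limit.

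First I would fix an increasing sequence of finite measurable partitions $\mathcal{P}_m=\{A^{(m)}_1,\dots,A^{(m)}_{d_m}\}$ of $\frakX$ (discarding $Q$-null atoms so that $Q(A^{(m)}_r)>0$) whose generated $\sigma$-algebras $\mathcal{E}_m=\sigma(\mathcal{P}_m)$ increase with $\bigvee_m\mathcal{E}_m\supseteq\sigma(f_1,\dots,f_n)$; such partitions exist because $\sigma(f_1,\dots,f_n)$ is countably generated. For each $m$ I set $U^{(m)}_r=Q(\bfcdot\cap A^{(m)}_r)/Q(A^{(m)}_r)\in\calf$, $q^{(m)}_{j,r}=Q_j(A^{(m)}_r)$ and $\lambda^{(m)}_r=\sum_{j=1}^np_jq^{(m)}_{j,r}=\lambda Q(A^{(m)}_r)>0$. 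Writing $\widehat Q^{(m)}_j=\sum_r q^{(m)}_{j,r}U^{(m)}_r$ and forming $\widehat F^{(m)}$, $\widehat G^{(m)}_\ell$, $\widehat\alpha_1^{(m)}$, $\widehat\beta_1^{(m)}$ as prescribed in Theorem \ref{t759375}, one checks that the $A^{(m)}_r$ are pairwise disjoint with $U^{(m)}_r$ supported on $A^{(m)}_r$, that $\widehat Q^{(m)}_j$ has $Q$-density $\widehat f^{(m)}_j=E_Q[f_j\mid\mathcal{E}_m]=\sum_r\frac{Q_j(A^{(m)}_r)}{Q(A^{(m)}_r)}\bbone_{A^{(m)}_r}$, and -- crucially -- that $\frac1\lambda\sum_jp_j\widehat Q^{(m)}_j=Q$, so the factor $\exp(\lambda(Q-\dirac_0))$ is unchanged. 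Hence Theorems \ref{t759375} and \ref{t729587} apply verbatim to this discretised data and yield (\ref{e5254745b}) and (\ref{e87965b}) with $\widehat F^{(m)},\widehat G^{(m)}_\ell,\widehat\alpha_1^{(m)},\widehat\beta_1^{(m)}$ in place of $F,G_\ell,\widetilde\alpha_1,\widetilde\beta_1$ (the first of these for every $m$ large enough that $\widehat\alpha_1^{(m)}<\frac{1}{2^{3/2}}$).

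It then remains to pass to the limit on both sides. For the bound quantities, the computation in Remark \ref{r14143} gives $\widehat\alpha_1^{(m)}=\sum_jg(2p_j)p_j^2\int\widehat f^{(m)}_j\min\{\widehat f^{(m)}_j/(2^{3/2}\lambda),2\}\,\dd Q$ and the analogous formula for $\widehat\beta_1^{(m)}$; since $x\mapsto x\min\{cx,a\}$ is Lipschitz on $[0,\infty)$ and $\widehat f^{(m)}_j\to f_j$ in $L^1(Q)$ by the martingale convergence theorem, I obtain $\widehat\alpha_1^{(m)}\to\widetilde\alpha_1$ and $\widehat\beta_1^{(m)}\to\widetilde\beta_1$. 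For the left-hand side, $\norm{\widehat Q^{(m)}_j-Q_j}=\int\ab{\widehat f^{(m)}_j-f_j}\,\dd Q\to0$, and using that $\calm$ is a Banach algebra with submultiplicative norm and continuous exponential map, together with the uniform bounds $\norm{\widehat F^{(m)}_j}\leq1+2p_j$, a telescoping estimate gives $\norm{\widehat F^{(m)}-F}\to0$ and $\norm{\widehat G^{(m)}_\ell-G_\ell}\to0$, whence $\norm{\widehat F^{(m)}-\widehat G^{(m)}_\ell}\to\norm{F-G_\ell}$. Letting $m\to\infty$ and invoking the continuity of the right-hand sides of (\ref{e5254745b}) and (\ref{e87965b}) in $\widehat\alpha_1^{(m)}$, $\widehat\beta_1^{(m)}$ (for the first bound, $\widetilde\alpha_1<\frac{1}{2^{3/2}}$ guarantees $\widehat\alpha_1^{(m)}<\frac{1}{2^{3/2}}$ eventually) then completes the proof.

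I expect the main obstacle to be the left-hand limit $\norm{\widehat F^{(m)}-\widehat G^{(m)}_\ell}\to\norm{F-G_\ell}$: one must propagate the elementary convergence $\norm{\widehat Q^{(m)}_j-Q_j}\to0$ through the exponentials $\ee^{-\widehat R^{(m)}_j}$ and through the finite products defining $\widehat V^{(m)}_j$, $\widehat M^{(m)}_k$ and $\widehat F^{(m)}$, while keeping all intermediate norms uniformly bounded so that the Banach-algebra continuity estimates apply. The remaining ingredients -- existence of the generating partitions, $L^1$ martingale convergence, and the Lipschitz bound for $x\mapsto x\min\{cx,a\}$ -- are standard.
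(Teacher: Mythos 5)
Your proposal is correct and takes essentially the same route as the paper: the paper's own proof (which defers the details to the proofs of Theorems 1 and 2 in \citet{MR2310979}) is exactly this standard approximation procedure, namely replacing the $Q_j$ by discretised distributions of the form covered by Theorems \ref{t759375} and \ref{t729587} with $\norm{Q_j-\widetilde{Q}_j}$ small, applying those theorems, and transferring the bounds by total-variation and Banach-algebra continuity. Your conditional-expectation construction with $L^1$ martingale convergence, together with the observation that the discretised mixture $\frac{1}{\lambda}\sum_{j=1}^np_j\widehat{Q}_j^{(m)}$ equals $Q$ exactly (so the exponential factor is unchanged), is a clean instantiation of precisely the details the paper omits.
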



\begin{remark} \label{r2456}
\hspace{2em} 
\begin{enumerate}
\item[(a)] Let us explain the bounds of Corollary \ref{c7435328}
in the case $\ell=0$ with the help of random variables.
We assume the notation as in that corollary. Let 
$\{0\}\in\cala$ and $S_n=\sum_{j=1}^nX_j$ be the sum of independent 
$\frakX$-valued random variables $X_1,\dots,X_n$ with
$P(X_j\neq0)=p_j>0$ and $Q_j=P(X_j\in\bfcdot\,|\,X_j\neq 0)$. 
Let $T=\sum_{m=1}^NY_m$, where $N,Y_{m}$, $(m\in\NN)$ are independent 
random variables, $N$ is $\Zpl$-valued and has Poisson distribution 
$\Po(\lambda)$, whereas the $\frakX$-valued $Y_m$ are identically 
distributed with distribution $Q$. Then we have 
\begin{align*}
\qquad\quad
\dtv(P^{S_n},P^T)
\leq \frac{\widetilde{\alpha}_1}{1-2^{3/2}\widetilde{\alpha}_1},
\mbox{ if } \widetilde{\alpha}_1<\frac{1}{2^{3/2}}; \quad 
\dtv(P^{S_n},P^T)\leq 7.8\,\widetilde{\beta}_1.
\end{align*}

\item[(b)] For $\ell=0$, (\ref{e5254745b}) and (\ref{e87965b}) are 
refinements of (10) and (11) in \cite{MR2310979}.

\item[(c)]
Let the assumptions of Corollary \ref{c7435328}
hold. Further suppose  that $Q_j\ll\mu$ for all $j\in\set{n}$, 
where $\mu$ is a $\sigma$-finite measure on $(\frakX,\cala)$. 
Let $\widetilde{f}_j$ be a Radon-Nikod\'{y}m density 
of $Q_j$ with respect to $\mu$. Then 
$\widetilde{f}:=\frac{1}{\lambda}\sum_{j=1}^np_j\widetilde{f}_j$
is a $\mu$-density of $Q$. For $j\in\set{n}$, we get a 
$Q$-density of $Q_j$ by defining 
$f_j(x)=\frac{\widetilde{f}_j(x)}{\widetilde{f}(x)}$ for
$x\in\{\widetilde{f}>0\}$ and $f_j(x)=0$ otherwise. 
This gives the possibility to evaluate $\widetilde{\alpha}_1$ and 
$\widetilde{\beta}_1$ by using $\widetilde{f}_j$  for $j\in\set{n}$, 
$\widetilde{f}$ and $\mu$. In fact, 
\begin{align*}
\widetilde{\alpha}_1
&=\sum_{j=1}^ng(2p_j)p_j^2\int_{\{\widetilde{f}>0\}} 
\widetilde{f}_j\min\Bigl\{\frac{\widetilde{f}_j}{2^{3/2}
\lambda\widetilde{f}},\,2\Bigr\}\,\dd \mu,\\
\widetilde{\beta}_1
&=\sum_{j=1}^np_j^2\int_{\{\widetilde{f}>0\}} \widetilde{f}_j
\min\Bigl\{
\frac{\widetilde{f}_j}{\lambda\widetilde{f}},\,1\Bigr\}\,\dd \mu.
\end{align*}
If, for example, $(\frakX,+,\cala)=(\RR^1,+,\calb^1)$,
$\mu=\bblambda^1$ is the Lebesgue measure on $(\RR^1,\calb^1)$
and $Q_j$ is the exponential distribution with $\bblambda^1$-density 
$\widetilde{f}_j(x)=t_j\ee^{-t_jx}\bbone_{(0,\infty)}(x)$
for $x\in\RR$, $j\in\set{n}$, $t_j\in(0,\infty)$, then we obtain
\begin{align*}
\widetilde{\alpha}_1=\sum_{j=1}^ng(2p_j)p_j^2\int_{(0,\infty)} 
t_j\ee^{-t_jx}\min\Bigl\{\frac{t_j\ee^{-t_jx}}{2^{3/2}
\sum_{i=1}^np_i t_i\ee^{-t_ix}},\,2\Bigr\}\,\dd\bblambda^1(x)
\end{align*}
and a similar formula for $\widetilde{\beta}_1$. 
\end{enumerate}
\end{remark}

\section{Application in the Poisson point process 
approximation}\label{s1545}
Let $(S,\cals)$ be a measurable space and 
$\frakX=\frakX(S,\cals)$ be the set of all point measures 
of the form $\mu=\sum_{i\in I}\dirac_{x_i}$, 
where $I\subseteq\NN$ and $x_i\in S$ for all $i\in I$. 
Further, let $\cala=\sigma((\pi_B\,|\,B\in\cals))$ be the 
smallest $\sigma$-algebra over $\frakX$ such that all the 
evaluation maps 
$\pi_B:\,\frakX\longrightarrow\Zpl\cup\{\infty\}=:\overlZpl$,
$\mu\mapsto\mu(B)$ for $B\in\cals$ are measurable
with respect to the power set $2^{\overlZpl}$ of $\overlZpl$, see 
e.g.\ \citet{MR1199815}. The mapping
$\fraks:\frakX\times \frakX\longrightarrow\frakX$, 
$(\mu,\nu)\mapsto\mu+\nu$ is measurable with respect to 
$\cala\otimes\cala$ and $\cala$. Indeed, 
for $B\in\cals$ and $k\in\Zpl$, we have 
$\fraks^{-1}(\pi_B^{-1}(\{k\}))
=\bigcup_{j\in\setn{k}}
(\pi_B^{-1}(\{j\})\times\pi_B^{-1}(\{k-j\}))\in\cala\otimes\cala$ and 
$\fraks^{-1}(\pi_B^{-1}(\{\infty\}))
=(\pi_B^{-1}(\{\infty\})\times\frakX)
\cup
(\frakX\times \pi_B^{-1}(\{\infty\}))\in\cala\otimes\cala$. 
Therefore, $\fraks^{-1}(\pi_B^{-1}(C))\in\cala\otimes\cala$ for 
all $C\subseteq\overlZpl$.
Consequently, $(\frakX,+,\cala)$ is a measurable Abelian semigroup, 
where the zero element is the zero measure~$0$.

Let $n\in\NN$ be fixed and $N_j$,  $X_j$, $X_{j,k}$, $Z_j$,
$(j\in\set{n},\,k\in\NN)$ 
be independent random variables, where the $X_j$, $X_{j,k}$ are 
$S$-valued with distributions 
$P^{X_j}=P^{X_{j,k}}$, the $Z_j$ 
are Bernoulli random variables with $P(Z_j=1)=1-P(Z_j=0)=p_j\in(0,1]$
and the $N_j$ are Poisson $\Po(p_j)$ distributed. 
Suppose that, for all $j\in\set{n}$, 
$P^{X_j}$ has a density $\widetilde{h}_j$
with respect to a $\sigma$-finite measure $\nu$ on $(S,\cals)$.
Set $Q_j=P^{\dirac_{X_j}}$ for $j\in\set{n}$ and 
let $\lambda=\sum_{j=1}^np_j$,
$Q=\frac{1}{\lambda}\sum_{j=1}^np_jQ_j$,
$\eta=\frac{1}{\lambda}\sum_{j=1}^np_jP^{X_j}$ and
$\widetilde{h}=\frac{1}{\lambda}\sum_{j=1}^np_j\widetilde{h}_j$.
Then the point process $\xi=\sum_{j=1}^nZ_j\dirac_{X_j}$ has 
distribution $F=\prod_{j=1}^n(\dirac_0+p_j(Q_j-\dirac_0))$. 
The approximating $G=\exp(\lambda(Q-\dirac_0))$ is the
distribution of the Poisson point process
$\zeta=\sum_{j=1}^n\sum_{k=1}^{N_j}\dirac_{X_{j,k}}$
with intensity measure $E\zeta=E\xi=\lambda\eta$
with $\nu$-density $\lambda\widetilde{h}$.

\begin{proposition}\label{c6724665}
Under the assumptions above, we have
\begin{align}\label{e21847}
\dtv(P^\xi,P^\zeta)
\leq \frac{\widetilde{\alpha}_1}{1-2^{3/2}\widetilde{\alpha}_1},
\mbox{ if } \widetilde{\alpha}_1<\frac{1}{2^{3/2}}; \quad 
\dtv(P^\xi,P^\zeta)
\leq 7.8\,\widetilde{\beta}_1,
\end{align}
where 
\begin{align*}
\widetilde{\alpha}_1
&=\sum_{j=1}^ng(2p_j)p_j^2\int_{\{\widetilde{h}>0\}}
\widetilde{h}_j\min\Bigl\{
\frac{\widetilde{h}_j}{2^{3/2}\lambda\widetilde{h}},\,2\Bigr\}
\,\dd\nu,\\
\widetilde{\beta}_1
&=\sum_{j=1}^np_j^2\int_{\{\widetilde{h}>0\}} \widetilde{h}_j
\min\Bigl\{\frac{\widetilde{h}_j}{\lambda\widetilde{h}},\,1\Bigr\}
\,\dd\nu,
\end{align*}
and $g$ is defined as in \textup{(\ref{e86576476})}. 
\end{proposition}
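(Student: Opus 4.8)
The plan is to obtain Proposition \ref{c6724665} as the special case $\ell=0$ of Corollary \ref{c7435328}, for which the approximating measure is exactly $G_0=\exp(\lambda(Q-\dirac_0))$. The first step is to check that the probabilistic construction here realizes the convolution structure of the corollary. For fixed $j\in\set{n}$, the random point measure $Z_j\dirac_{X_j}$ is the empty configuration $0$ with probability $1-p_j$ and is distributed as $\dirac_{X_j}\sim Q_j$ with probability $p_j$; its law is therefore $(1-p_j)\dirac_0+p_jQ_j=\dirac_0+p_j(Q_j-\dirac_0)=F_j$, and by independence $\xi=\sum_{j=1}^nZ_j\dirac_{X_j}$ has law $F=\prod_{j=1}^nF_j$. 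Similarly, for fixed $j$ the superposition $\sum_{k=1}^{N_j}\dirac_{X_{j,k}}$ is a random sum of $N_j\sim\Po(p_j)$ independent summands $\dirac_{X_{j,k}}\sim Q_j$ (here $P^{X_{j,k}}=P^{X_j}$), hence has law $\CPo(p_j,Q_j)=\exp(p_j(Q_j-\dirac_0))$; convolving over the independent indices $j$ and using that $\calm$ is a commutative Banach algebra gives that $\zeta$ has law $\prod_{j=1}^n\exp(p_j(Q_j-\dirac_0))=\exp(\lambda(Q-\dirac_0))=G_0$. Since $Q\geq\frac{p_j}{\lambda}Q_j$ with $p_j>0$, we have $Q_j\ll Q$, so the densities required by the corollary exist.

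The heart of the argument is to match the integrals defining $\widetilde\alpha_1$, $\widetilde\beta_1$ in Corollary \ref{c7435328}, which are taken against $Q$ on $(\frakX,\cala)$, with the $\nu$-integrals over $S$ stated in the proposition. I would transport everything through the map $\iota\colon S\to\frakX$, $x\mapsto\dirac_x$. This map is $\cals$-$\cala$-measurable, and in fact $\iota^{-1}(\cala)=\cals$: since $\pi_B(\dirac_x)=\bbone_B(x)$ we get $\iota^{-1}(\pi_B^{-1}(\{1\}))=B$ for every $B\in\cals$, and the $\pi_B$ generate $\cala$. Setting $\mu:=\iota_\ast\nu$, a $\sigma$-finite measure on $\frakX$, we have $Q_j=\iota_\ast(\widetilde h_j\cdot\nu)\ll\mu$, and the factorization $\iota^{-1}(\cala)=\cals$ forces the Radon-Nikod\'{y}m density $\widetilde f_j:=\dd Q_j/\dd\mu$ to satisfy $\widetilde f_j\circ\iota=\widetilde h_j$ $\nu$-almost everywhere; likewise $\widetilde f:=\frac{1}{\lambda}\sum_{i=1}^np_i\widetilde f_i$ satisfies $\widetilde f\circ\iota=\widetilde h$. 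Applying Remark \ref{r2456}(c) with this choice of $\mu$ then expresses $\widetilde\alpha_1$ and $\widetilde\beta_1$ as integrals over $\{\widetilde f>0\}\subseteq\frakX$ against $\mu$.

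It remains to push these $\frakX$-integrals back to $S$ and to invoke the corollary. Using $\mu=\iota_\ast\nu$ together with $\widetilde f_j\circ\iota=\widetilde h_j$ and $\widetilde f\circ\iota=\widetilde h$, each integral over $\{\widetilde f>0\}$ collapses to the corresponding integral over $\{\widetilde h>0\}$; for instance $\int_{\{\widetilde f>0\}}\widetilde f_j\min\{\widetilde f_j/(2^{3/2}\lambda\widetilde f),2\}\,\dd\mu$ becomes $\int_{\{\widetilde h>0\}}\widetilde h_j\min\{\widetilde h_j/(2^{3/2}\lambda\widetilde h),2\}\,\dd\nu$, so the $\widetilde\alpha_1,\widetilde\beta_1$ of the proposition are identical to those of the corollary. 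Reading off (\ref{e5254745b}) and (\ref{e87965b}) with $\ell=0$, the prefactor $\frac{\sqrt{2!}}{1!}2^{1/2}$ equals $2$, giving $\norm{F-G_0}\leq 2\widetilde\alpha_1/(1-2^{3/2}\widetilde\alpha_1)$, while $c_0\leq 15.6$ gives $\norm{F-G_0}\leq 15.6\,\widetilde\beta_1$; dividing by $2$ via $\dtv(P^\xi,P^\zeta)=\frac{1}{2}\norm{F-G_0}$, valid since $F$ and $G_0$ are both probability measures, yields (\ref{e21847}). I expect the only genuine obstacle to be the measure-theoretic bookkeeping of the preceding paragraph---verifying measurability of $\iota$, the identity $\iota^{-1}(\cala)=\cals$, and the resulting transfer of Radon-Nikod\'{y}m densities from $S$ to $\frakX$; once these are in place, the proposition follows by substitution from the already-proved Corollary \ref{c7435328}.
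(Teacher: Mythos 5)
Your proposal is correct and follows essentially the same route as the paper: the paper likewise deduces the proposition from Corollary \ref{c7435328} (with $\ell=0$) by transporting everything through the map $\tau\colon S\to\frakX$, $x\mapsto\dirac_x$ (your $\iota$), verifying $\{\tau^{-1}(A)\,|\,A\in\cala\}=\cals$, taking $\mu=\nu^{\tau}$, and showing $\widetilde{f}_j\circ\tau=\widetilde{h}_j$ so that the $Q$-integrals of Remark \ref{r2456}(c) collapse to the stated $\nu$-integrals. The only bookkeeping point you assert rather than check is the $\sigma$-finiteness of $\mu$, which the paper establishes via $\mu(\{\dirac_x\,|\,x\in B\})=\nu(B)$ for $B\in\cals$; this fits within the verification step you already flagged.
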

\begin{remark}
In the literature, there are two inequalities, which 
are comparable with those of Proposition \ref{c6724665}. 
The simple one is the Le Cam type bound 
\begin{align}\label{e52645}
\dtv(P^\xi,P^\zeta)\leq\sum_{j=1}^np_j^2
\end{align}
and is comparable to (\ref{e24648}). A proof can, for example, 
be found in \citet[1.11.2 on p.\ 81]{MR0517931}. 

A more interesting bound is given in Theorem 2 of 
\citet{MR974580}, which reads in our notation as
\begin{align}\label{e2159}
\dtv(P^\xi,P^\zeta)\leq \frac{c_\lambda}{\lambda}
\sum_{j=1}^np_j^2\varphi_j^2
\end{align}
with $c_\lambda=\frac{1}{2}+\max\{\log(2\lambda),0\}$ and
\begin{align*}
\varphi_j=\sup_{C\in\cals:\,\eta(C)>0}\frac{P(X_j\in C)}{\eta(C)},
\qquad(j\in\set{n}).
\end{align*}
We note that the change of the notation is justified by
\citet[Theorem 1.4.1, p.\ 29]{MR1199815}. 
In fact, for $j\in\set{n}$, 
Barbour's term $\dirac_{Y_j}(\bfcdot\cap B)$
can be replaced with $Z_j\dirac_{X_j}$ where
$P^{Z_j}=\dirac_0+P(Y_j\in B)(\dirac_1-\dirac_0)$
and $P^{X_j}=P(Y_j\in\bfcdot\,|\,Y_j\in B)$.  

For a comparison of the bounds in (\ref{e52645}) and (\ref{e2159}) 
with those of Proposition \ref{c6724665}, we note that 
\begin{align}\label{e62759}
\widetilde{h}_j\leq\varphi_j\widetilde{h}\quad 
\nu\mbox{-almost everywhere for all } j\in\set{n}.
\end{align}
Indeed, if $C\in\cals$ with $\eta(C)>0$ then 
$\int_C\widetilde{h}_j\,\dd\nu =P(X_j\in C)
=\int_C\frac{P(X_j\in C)}{\eta(C)}\widetilde{h}\,\dd\nu
\leq \int_C\varphi_j\widetilde{h}\,\dd\nu$; 
on the other hand, if $\eta(C)=0$, then 
$\int_C\widetilde{h}_j\,\dd\nu=P(X_j\in C)=0
=\int_C\varphi_j\widetilde{h}\,\dd\nu$.
Now, (\ref{e62759}) follows from 3.17 in \citet{MR1278485}.
Therefore, 
$\int_{\{\widetilde{h}>0\}}\frac{\widetilde{h}_j^2}{\widetilde{h}}
\,\dd\nu \leq \varphi_j\leq\varphi_j^2$.
Consequently, if $\lambda$ is large and the 
$\widetilde{h}_j$ for $j\in\set{n}$ are not too different, then  
the bounds in (\ref{e21847}) are preferable to the ones in 
(\ref{e52645}) and (\ref{e2159}). 

Further results in the Poisson process approximation can, 
for example, be found in \citet[Chapter 10]{MR1163825} 
and \citet{MR1199815} and in the works cited there. 
\end{remark}
\section{Proofs}\label{Proofs}
\subsection{Auxiliary norm estimates}\label{s5395}
The proofs of the theorems require some upper bounds of certain norm 
terms, which measure the smoothness of compound Poisson 
distributions. In fact, in the simplest case terms like 
$\norm{(U-\dirac_0)^k\exp(\lambda(U-\dirac_0))}$  have to be 
considered for $U\in\calf$ and $k\in\NN$. 
For some properties of such norm terms, see, e.g., 
\cite{MR1368759}, 
\cite[Proposition~4]{MR1735783}, 
\cite[Lemma~3]{MR1841404},
\cite[Lemmata~3.4, 3.12]{MR2251442} and the references cited therein. 
In the following lemma, we present preliminary norm estimates,
which will be used in the proof of Lemma \ref{l47384}. 
A related bound can be found in \cite[Lemma~2]{MR1978097}.
\begin{lemma}\label{l3519219}
Let $d,k\in\NN$, $p_{j,r}\in\RR$ for $j\in\set{k}$
and $r\in\set{d}$, 
$\Lambda=(\lambda_1,\dots,\lambda_d)\in(0,\infty)^d$. 
For $r\in\set{d}$, let $U_r\in\calf$, $W_r=U_r-\dirac_0$. Set 
$R_j=\sum_{r=1}^dp_{j,r}W_r$ for $j\in\set{k}$ and 
$G=\exp(\sum_{r=1}^d\lambda_rW_r)$. Then, we have 
\begin{align}\label{e36455}
\Norm{\Bigl(\prod_{j=1}^kR_j\Bigr)G}
&\leq\Bigl(\frac{1}{k!}\sum_{r\in\set{d}^k}\Bigl(
\sum_{\ell\in\set{k}^k_{\neq}}\prod_{j=1}^k \frac{p_{j,r_{\ell(j)}}}{
\sqrt{\lambda_{r_{\ell(j)}}}}\Bigr)^2\Bigr)^{1/2}
\leq \sqrt{k!}\prod_{j=1}^k\Bigl(\sum_{r=1}^d
\frac{p_{j,r}^2}{\lambda_{r}}\Bigr)^{1/2}.
\end{align} 
\end{lemma}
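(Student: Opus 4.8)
The plan is to reduce the general statement to the canonical case $U_r=\dirac_{\unitvec{r}}$ on the product semigroup $\Zpl^d$, where $G$ becomes a genuine product of Poisson laws and the density of $(\prod_jR_j)G$ with respect to $G$ is explicit in terms of Charlier polynomials. There the total variation norm is controlled by the $L^2(G)$-norm of that density via Cauchy--Schwarz, and the resulting $L^2$ quantity is identified with the middle expression in \textup{(\ref{e36455})}. The last inequality is then a purely algebraic permanent estimate.

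First I would set up the reduction. Since $\Zpl^d$ is itself a measurable Abelian semigroup with zero, its convolution algebra $\calm_0$ of finite signed measures is an instance of $\calm$, and I define the linear map $T\colon\calm_0\to\calm$ by $T(\dirac_a)=U^a:=\prod_{r=1}^dU_r^{a_r}$ for $a\in\Zpl^d$. As $U^{a+a'}=U^aU^{a'}$, $T$ is an algebra homomorphism, and as each $U^a\in\calf$ has norm $1$, it is norm-contracting: $\norm{T(V)}\le\sum_a\ab{V(\{a\})}\norm{U^a}=\norm{V}$. Being bounded, $T$ commutes with the exponential series, and $T(\dirac_{\unitvec{r}}-\dirac_0)=W_r$. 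Setting $\widetilde R_j=\sum_rp_{j,r}(\dirac_{\unitvec{r}}-\dirac_0)$ and $\widetilde G=\exp(\sum_r\lambda_r(\dirac_{\unitvec{r}}-\dirac_0))$ gives $T(\widetilde R_j)=R_j$ and $T(\widetilde G)=G$, hence $T((\prod_j\widetilde R_j)\widetilde G)=(\prod_jR_j)G$ and $\norm{(\prod_jR_j)G}\le\norm{(\prod_j\widetilde R_j)\widetilde G}$. So it suffices to prove the first inequality in \textup{(\ref{e36455})} in the canonical case.

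In the canonical case $\widetilde G=\bigotimes_{r=1}^d\Po(\lambda_r)$ is a product measure on $\Zpl^d$, which is where the Charlier polynomials enter. The $\Po(\lambda)$-density of $(\dirac_1-\dirac_0)^m\Po(\lambda)$ is the normalized Charlier polynomial $\Ch_m(\,\cdot\,;\lambda)$, which I would read off from the generating function $\sum_mt^m\Ch_m(x;\lambda)/m!=\ee^{-t}(1+t/\lambda)^x$ together with the orthogonality $\int\Ch_m\Ch_{m'}\,\dd\Po(\lambda)=\dirac_{m,m'}\,m!\,\lambda^{-m}$ (a related norm appears in \cite[Lemma~2]{MR1978097}). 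By independence of the coordinates, $W^a\widetilde G$, with $W^a=\prod_rW_r^{a_r}$, has $\widetilde G$-density $\prod_{r=1}^d\Ch_{a_r}(\,\cdot\,;\lambda_r)$, and these are orthogonal in $L^2(\widetilde G)$ across multi-indices with squared norm $a!/\lambda^a$. Expanding $\prod_j\widetilde R_j=\sum_{r\in\set{d}^k}(\prod_jp_{j,r_j})W^{a(r)}$ and grouping by the exponent multi-index produces a $\widetilde G$-density $\psi$ of $(\prod_j\widetilde R_j)\widetilde G$, so Cauchy--Schwarz against the probability measure $\widetilde G$ gives $\norm{(\prod_j\widetilde R_j)\widetilde G}=\int\ab{\psi}\,\dd\widetilde G\le(\int\psi^2\,\dd\widetilde G)^{1/2}$. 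Expanding $\int\psi^2\,\dd\widetilde G$, using the orthogonality, and symmetrizing over the orderings of each exponent multi-index yields $\int\psi^2\,\dd\widetilde G=\sum_{\pi}\prod_iA_{i,\pi(i)}=\mathrm{perm}(A)$, where $A_{j,j'}=\sum_{r=1}^dp_{j,r}p_{j',r}/\lambda_r$; an analogous reindexing shows the middle expression in \textup{(\ref{e36455})} equals $\sqrt{\mathrm{perm}(A)}$, completing the first inequality.

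Finally, for the second inequality I would use $\mathrm{perm}(A)=\sum_{\pi\in\set{k}^k_{\neq}}\prod_iA_{i,\pi(i)}$ and estimate each entry by Cauchy--Schwarz, $\ab{A_{i,\pi(i)}}\le A_{ii}^{1/2}A_{\pi(i)\pi(i)}^{1/2}$; multiplying over $i$ and using that $\pi$ is a permutation gives $\prod_i\ab{A_{i,\pi(i)}}\le\prod_iA_{ii}$, hence $\mathrm{perm}(A)\le k!\prod_jA_{jj}=k!\prod_j\sum_rp_{j,r}^2/\lambda_r$, and taking square roots yields the stated bound. The main obstacle I anticipate is the multi-index bookkeeping of the third paragraph: tracking $\psi$ through the grouping by exponent multi-indices and verifying the symmetrization identity $\int\psi^2\,\dd\widetilde G=\mathrm{perm}(A)$ together with its matching to the middle expression. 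The contraction property of $T$ and the Charlier orthogonality are standard, so the computation of $\int\psi^2\,\dd\widetilde G$ is where the real work lies.
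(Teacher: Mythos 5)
Your proposal is correct and takes essentially the same route as the paper: the paper likewise expands $\bigl(\prod_{j=1}^kR_j\bigr)G$ in Charlier polynomials with respect to the product Poisson law, bounds the total variation norm by the $L^2$-norm of the Charlier-coefficient density via Cauchy--Schwarz, and obtains the final bound by a second Cauchy--Schwarz over permutations, which is equivalent to your entrywise Gram-matrix estimate $\ab{A_{i,\pi(i)}}\leq A_{ii}^{1/2}A_{\pi(i)\pi(i)}^{1/2}$. Your explicit contractive homomorphism $T$ and the identification of the middle term in (\ref{e36455}) as $\sqrt{\mathrm{perm}(A)}$ (both of which I checked are correct) are merely cleaner packagings of what the paper does inline, namely working with general $U_r$ directly, using $\norm{U_s^{m_s}}=1$ to pass to the $\ell^1$-sum of coefficients, and deriving the permanent-type identity through the multi-index count $\sum_{s:\,v(s)=m}1=k!/m!$.
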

\begin{proof}
We need some preparations. For $j\in\NN$, $m\in\ZZ$ and 
$t\in[0,\infty)$, let 
$\Delta^j\po(m,t)=\Delta^{j-1}\po(m-1,t)-\Delta^{j-1}\po(m,t)$,
$\Delta^0\po(m,t)=\po(m,t)$. It is well-known that 
$\Delta^j\po(m,t)=\frac{1}{t^j}\,\po(m,t)\,\Ch(j,m,t)$,
$(j,m\in\Zpl,\;t\in(0,\infty))$ (cf.\ \cite{MR1735783}), where
\begin{align*}
\Ch(j,x,t)=\sum_{i=0}^j\binomial{j}{i}\binomial{x}{i}
\,i!\,(-t)^{j-i},\qquad (j\in\Zpl,\;t,x\in\RR)
\end{align*}
denotes the Charlier polynomial of degree $j$ and
$\binomial{x}{i}=\prod_{j=1}^i\frac{x-j+1}{j}$
for $i\in\Zpl$ and $x\in \RR$.
Further, the Charlier polynomials are orthogonal with respect to the 
Poisson distribution (see, e.g., 
\cite[formula (1.14), page~4]{MR0481884}),
that is
\begin{equation}\label{eq79}
\sum_{m=0}^\infty \po(m,t)\,\Ch(i,m,t)\,\Ch(j,m,t)
=\bbone_{\{i\}}(j)\,i!\,t^i,\quad (i,j\in\Zpl,\,t\in(0,\infty)).
\end{equation}
It is easily shown that, for $j\in\Zpl$ and $r\in\set{d}$, we have
\begin{align*}
W_r^{j}\exp(\lambda_rW_r)
=\sum_{m=0}^\infty\Delta^{j}\po(m,\lambda_r)\,U_r^{m}.
\end{align*}
For $r\in\set{d}^k$ and $s\in\set{d}$, let 
$v_s(r)=\sum_{j=1}^k\bbone_{\{s\}}(r_j)$ and set
$v(r)=(v_1(r),\dots,v_d(r))\in\Zpl^d$. Clearly, $\vecsum{v(r)}=k$. 
For $r\in\set{d}^k$, we obtain
\begin{align*}
\prod_{j=1}^kW_{r_j}
=\prod_{j=1}^k\Bigl(\prod_{s=1}^dW_s^{\bbone_{\{s\}}(r_j)}\Bigr)
=\prod_{s=1}^dW_s^{v_s(r)}
\end{align*}
and similarly $\prod_{j=1}^k\lambda_{r_j}=\Lambda^{v(r)}$. 
Therefore, letting $\po(m,\Lambda)=\prod_{r=1}^d\po(m_r,\lambda_r)$
for $m\in\Zpl^d$, we get
\begin{align*}
\Norm{\Bigl(\prod_{j=1}^kR_{j}\Bigr)G}
&=\Norm{\sum_{r\in\set{d}^k}\Bigl(\prod_{j=1}^k p_{j,r_j}\Bigr)
\prod_{s=1}^d(W_s^{v_s(r)}\exp(\lambda_sW_s))}\\
&=\Norm{\sum_{r\in\set{d}^k}\Bigl(\prod_{j=1}^k p_{j,r_j}\Bigr)
\sum_{m\in\Zpl^d}^\infty\Bigl(\prod_{s=1}^d
(\Delta^{v_s(r)}\po(m_s,\lambda_s)U_s^{m_s})\Bigr)}\\
&=\Norm{\sum_{m\in\Zpl^d}\po(m,\Lambda)
\sum_{r\in\set{d}^k}\frac{1}{\Lambda^{v(r)}}
\Bigl(\prod_{j=1}^k p_{j,r_j}\Bigr)
\prod_{s=1}^d(\Ch(v_s(r),m_s,\lambda_s)U_s^{m_s})}\\
&\leq\sum_{m\in\Zpl^d}\po(m,\Lambda)
\Ab{\sum_{r\in\set{d}^k}\frac{1}{\Lambda^{v(r)}}
\Bigl(\prod_{j=1}^k p_{j,r_j}\Bigr)
\prod_{s=1}^d\Ch(v_s(r),m_s,\lambda_s)}.
\end{align*}
For $j\in\set{k}$ and $r\in\set{d}$, set
$a_{j,r}=\frac{p_{j,r}}{\sqrt{\lambda_{r}}}$. 
Hence, using the Cauchy-Schwarz inequality, we obtain
\begin{align*}
\Norm{\Bigl(\prod_{j=1}^kR_{j}\Bigr)G}^2
&\leq\sum_{m\in\Zpl^d}\po(m,\Lambda)\Bigl(\sum_{r\in\set{d}^k}
\frac{1}{\Lambda^{v(r)}}\Bigl(\prod_{j=1}^k p_{j,r_j}\Bigr)
\prod_{s=1}^d\Ch(v_s(r),m_s,\lambda_s)\Bigr)^2\\
&=\sum_{r\in\set{d}^k}\sum_{\widetilde{r}\in\set{d}^k}
\frac{1}{\Lambda^{v(r)+v(\widetilde{r})}}
\prod_{j=1}^k (p_{j,r_j}p_{j,\widetilde{r}_j})\\
&\quad{}\times\prod_{s=1}^d\Bigl(
\sum_{m_s=0}^\infty\po(m_s,\lambda_s) 
\Ch(v_s(r),m_s,\lambda_s)
\Ch(v_s(\widetilde{r}),m_s,\lambda_s)\Bigr).
\end{align*}
The application of (\ref{eq79}) now gives
\begin{align*}
\Norm{\Bigl(\prod_{j=1}^kR_{j}\Bigr)G}^2
&\leq\sum_{r\in\set{d}^k}\sum_{\widetilde{r}\in\set{d}^k:\,
v(r)=v(\widetilde{r})}
\frac{1}{\Lambda^{v(r)+v(\widetilde{r})}}
\Bigl(\prod_{j=1}^k (p_{j,r_j}p_{j,\widetilde{r}_j})\Bigr)
\prod_{s=1}^d\Bigl(v_s(r)!\,\lambda_s^{v_s(r)}\Bigr)\\
&=\sum_{m\in\Zpl^d:\,\vecsum{m}=k}m!
\sum_{r\in\set{d}^k:\,v(r)=m}  
\sum_{\widetilde{r}\in\set{d}^k:\,v(\widetilde{r})=m}
\prod_{j=1}^k(a_{j,r_j}a_{j,\widetilde{r}_j})\\
&=\sum_{m\in\Zpl^d:\,\vecsum{m}=k}m!
\Bigl(\sum_{r\in\set{d}^k:\,v(r)=m}\prod_{j=1}^ka_{j,r_j}\Bigr)^2.
\end{align*}
For $z\in\CC^d$, we have 
\begin{align*}
\sum_{m\in\Zpl^d:\,\vecsum{m}=k}
\Bigl(\sum_{s\in\set{d}^k:\,v(s)=m}1\Bigr)z^m
&=\sum_{s\in\set{d}^k}\sum_{m\in\Zpl^d:\,m=v(s)}z^{v(s)}\\
&=\sum_{s\in\set{d}^k}\prod_{j=1}^kz_{s_j}
=\Bigl(\sum_{s=1}^dz_s\Bigr)^k
=\sum_{m\in\Zpl^d:\,\vecsum{m}=k}\frac{k!}{m!}z^m,
\end{align*}
which implies that, for $m\in\Zpl^d$ with $\vecsum{m}=k$,
\begin{align}\label{e451545}
\sum_{s\in\set{d}^k:\,v(s)=m}1=\frac{k!}{m!}.
\end{align}
Consequently
\begin{align*}
\Norm{\Bigl(\prod_{j=1}^kR_{j}\Bigr)G}^2
&\leq \sum_{m\in\Zpl^d:\,\vecsum{m}=k}m!
\Bigl(\sum_{r\in\set{d}^k:\,v(r)=m}\prod_{j=1}^k a_{j,r_j}\Bigr)^2\\
&=\frac{1}{k!}\sum_{m\in\Zpl^d:\,\vecsum{m}=k}
\sum_{s\in\set{d}^k:\,v(s)=m}(v(s)!)^2 
\Bigl(\sum_{r\in\set{d}^k:\,v(r)=v(s)}\prod_{j=1}^ka_{j,r_j}\Bigr)^2\\
&=\frac{1}{k!}\sum_{s\in\set{d}^k} 
\Bigl(v(s)!\sum_{r\in\set{d}^k:\,v(r)=v(s)}\prod_{j=1}^ka_{j,r_j}
\Bigr)^2.
\end{align*}
For $m\in\Zpl^d$ and $r,s\in\set{d}^k$ with $v(r)=v(s)=m$, 
it easily follows from the definition of $v(r)$ that 
$\sum_{\ell\in\set{k}^k_{\neq}}
\bbone_{\{r\}}(s_{\ell(1)},\dots,s_{\ell(k)})=m!$.
However, a more explicit proof is as follows: 
Since the left-hand side clearly only depends on $m$, we obtain by
using (\ref{e451545}) that 
\begin{align*}
\lefteqn{\sum_{\ell\in\set{k}^k_{\neq}}
\bbone_{\{r\}}(s_{\ell(1)},\dots,s_{\ell(k)})
=\frac{m!}{k!}\sum_{\widetilde{r}\in\set{d}^k:\,v(\widetilde{r})=m}
\sum_{\ell\in\set{k}^k_{\neq}}
\bbone_{\{r\}}(s_{\ell(1)},\dots,s_{\ell(k)})}\\
&=\frac{m!}{k!}\sum_{\ell\in\set{k}^k_{\neq}}
\sum_{\widetilde{r}\in\set{d}^k:\,v(\widetilde{r})=m}
\bbone_{\{\widetilde{r}\}}(s_{\ell(1)},\dots,s_{\ell(k)})
=\frac{m!}{k!}\sum_{\ell\in\set{k}^k_{\neq}}1=m!. 
\end{align*}
Hence, for 
$s\in\set{d}^k$,
\begin{align*}
\lefteqn{
v(s)!\sum_{r\in\set{d}^k:\,v(r)=v(s)}\prod_{j=1}^k a_{j,r_j}
=\sum_{r\in\set{d}^k:\,v(r)=v(s)}
\sum_{\ell\in\set{k}^k_{\neq}}\bbone_{\{r\}}
(s_{\ell(1)},\dots,s_{\ell(k)})\prod_{j=1}^ka_{j,r_j}}\\
&=\sum_{\ell\in\set{k}^k_{\neq}}
\Bigl(\sum_{r\in\set{d}^k:\,v(r)=v(s)}\bbone_{\{r\}}(s_{\ell(1)},
\dots,s_{\ell(k)})\Bigr)\prod_{j=1}^ka_{j,s_{\ell(j)}}
=\sum_{\ell\in\set{k}^k_{\neq}}
\prod_{j=1}^ka_{j,s_{\ell(j)}}.
\end{align*}
Using the Cauchy-Schwarz inequality again, 
\begin{align*}
\lefteqn{\Norm{\Bigl(\prod_{j=1}^kR_{j}\Bigr)G}^2
\leq \frac{1}{k!}\sum_{r\in\set{d}^k}\Bigl(
\sum_{\ell\in\set{k}^k_{\neq}}\prod_{j=1}^ka_{j,r_{\ell(j)}}\Bigr)^2
=\frac{1}{k!}\sum_{\ell\in\set{k}^k_{\neq}}
\sum_{\widetilde{\ell}\in\set{k}^k_{\neq}}\sum_{r\in\set{d}^k}
\prod_{j=1}^k(a_{j,r_{\ell(j)}}a_{j,r_{\widetilde{\ell}(j)}})}\\
&\leq \frac{1}{k!}\sum_{\ell\in\set{k}^k_{\neq}}
\sum_{\widetilde{\ell}\in\set{k}^k_{\neq}}
\Bigl(\sum_{r\in\set{d}^k}\prod_{j=1}^ka_{j,r_{\ell(j)}}^2\Bigr)^{1/2}
\Bigl(\sum_{r\in\set{d}^k}
\prod_{j=1}^ka_{j,r_{\widetilde{\ell}(j)}}^2\Bigr)^{1/2}\\
&= \frac{1}{k!}\sum_{\ell\in\set{k}^k_{\neq}}
\sum_{\widetilde{\ell}\in\set{k}^k_{\neq}}
\prod_{j=1}^k\Bigl(\Bigl(\sum_{r_{\ell(j)}=1}^d
a_{j,r_{\ell(j)}}^2\Bigr)\Bigl(\sum_{r_{\widetilde{\ell}(j)}=1}^d
a_{j,r_{\widetilde{\ell}(j)}}^2\Bigr)\Bigr)^{1/2} 
=k!\prod_{j=1}^k\Bigl(\sum_{r=1}^da_{j,r}^2\Bigr),\qquad
\end{align*}
which proves (\ref{e36455}).
\end{proof}
 
\begin{corollary}
Under the assumptions of Lemma \ref{l3519219},
we obtain, for $k=1$, resp.\ $k=2$, that 
\begin{align}
\norm{R_1 G}
&\leq\Bigl(\sum_{r=1}^d\frac{p_{1,r}^2}{\lambda_{r}}\Bigr)^{1/2},
\label{e1543811}\\
\norm{R_1R_2G} 
&\leq\Bigl(\frac{1}{2}\sum_{(r,s)\in\set{d}^2}\frac{(p_{1,r}p_{2,s}
+p_{1,s}p_{2,r})^2}{\lambda_r\lambda_s}\Bigr)^{1/2}
\leq \sqrt{2}\prod_{j=1}^2\Bigl(\sum_{r=1}^d\frac{p_{j,r}^2}{
\lambda_r}\Bigr)^{1/2}.\label{e1543812}
\end{align}
\end{corollary}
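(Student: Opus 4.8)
The plan is to derive both assertions as immediate specializations of Lemma~\ref{l3519219} to $k=1$ and $k=2$, so the only work is to evaluate the permutation sum appearing in the middle expression of (\ref{e36455}) in these two small cases.

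First I would treat $k=1$. The set $\set{1}^1_{\neq}$ contains only the identity permutation $\ell(1)=1$, so the inner sum over $\ell$ reduces to a single term and the middle bound in (\ref{e36455}) becomes $\Bigl(\frac{1}{1!}\sum_{r=1}^d(p_{1,r}/\sqrt{\lambda_r})^2\Bigr)^{1/2}$, which is exactly the right-hand side of (\ref{e1543811}). For $k=1$ the middle and right bounds in (\ref{e36455}) coincide, since $\sqrt{1!}=1$, so nothing further is needed.

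Next I would treat $k=2$. Here $\set{2}^2_{\neq}$ consists of the identity and the transposition, and writing $r=(r_1,r_2)$ and relabelling $(r_1,r_2)=(r,s)$, the inner sum over the two permutations equals
\begin{align*}
\frac{p_{1,r}}{\sqrt{\lambda_r}}\frac{p_{2,s}}{\sqrt{\lambda_s}}
+\frac{p_{1,s}}{\sqrt{\lambda_s}}\frac{p_{2,r}}{\sqrt{\lambda_r}}
=\frac{p_{1,r}p_{2,s}+p_{1,s}p_{2,r}}{\sqrt{\lambda_r\lambda_s}}.
\end{align*}
Substituting this into the middle bound of (\ref{e36455}) with the prefactor $\frac{1}{2!}=\frac12$ yields precisely the first inequality in (\ref{e1543812}).

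Finally, the second inequality in (\ref{e1543812}) is nothing but the right-hand bound in (\ref{e36455}) for $k=2$, since $\sqrt{2!}=\sqrt2$, so it may simply be read off. If a self-contained verification from the first inequality is preferred, I would instead use $(a+b)^2\leq 2(a^2+b^2)$ together with the symmetry under $r\leftrightarrow s$ to obtain
\begin{align*}
\frac12\sum_{(r,s)\in\set{d}^2}
\frac{(p_{1,r}p_{2,s}+p_{1,s}p_{2,r})^2}{\lambda_r\lambda_s}
\leq 2\Bigl(\sum_{r=1}^d\frac{p_{1,r}^2}{\lambda_r}\Bigr)
\Bigl(\sum_{s=1}^d\frac{p_{2,s}^2}{\lambda_s}\Bigr),
\end{align*}
and taking square roots gives the bound $\sqrt2\prod_{j=1}^2(\sum_{r=1}^d p_{j,r}^2/\lambda_r)^{1/2}$. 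I expect no genuine obstacle here: the entire content is the routine enumeration of $\set{k}^k_{\neq}$ for $k\in\{1,2\}$ together with elementary algebra, the only point requiring a little care being the correct pairing of the indices $r_{\ell(j)}$ with the factors $p_{j,\bfcdot}$ in the transposition term (which produces $p_{1,s}p_{2,r}$ rather than $p_{1,r}p_{2,s}$).
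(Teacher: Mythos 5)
Your proposal is correct and coincides with the paper's (implicit) argument: the corollary is stated without proof precisely because it is the immediate specialization of Lemma~\ref{l3519219} to $k=1$ and $k=2$, which is exactly what you carry out, with the permutation sum and the pairing of indices in the transposition term handled correctly. Your optional self-contained derivation of the second inequality in (\ref{e1543812}) via $(a+b)^2\leq 2(a^2+b^2)$ and the $r\leftrightarrow s$ symmetry is also sound, though it is not needed since it merely reproves the right-hand bound of (\ref{e36455}) in this case.
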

We note that (\ref{e1543811}) was shown in  
\cite[formula~(18)]{MR1701409},
whereas (\ref{e1543812}) is a generalization of one part of 
(19) of that paper.
The next lemma is needed in the proof of Lemma \ref{l47384} below.
\begin{lemma}\label{l46278945}
Let $k\in\NN$, $m\in\Zpl^2$ with $\vecsum{m}\leq k$. Then
\begin{align*}
(2m_1+m_2)!\leq((2k)!)^{m_1/k}((2k-1)!)^{m_2/(2k)},
\end{align*}
where equality holds in the case $k=1$.
\end{lemma}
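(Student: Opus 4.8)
The plan is to pass to logarithms and reduce everything to a statement about the sequence $S(n):=\log(n!)=\sum_{i=1}^n\log i$. Setting $s=2m_1+m_2$ and using $m_1=(s-m_2)/2$, the claimed inequality $\log((2m_1+m_2)!)\le\frac{m_1}{k}\log((2k)!)+\frac{m_2}{2k}\log((2k-1)!)$ becomes, after multiplying by $2k$ and simplifying with $S(2k)-S(2k-1)=\log(2k)$,
\begin{equation*}
m_2\log(2k)\le s\,S(2k)-2k\,S(s).
\end{equation*}
Since $\vecsum{m}=m_1+m_2\le k$ forces $m_2\le s$ and $s+m_2=2(m_1+m_2)\le 2k$, we have $m_2\le\min\{s,2k-s\}$; as the right-hand side $\Phi(s):=s\,S(2k)-2k\,S(s)$ does not involve $m_2$ and $\log(2k)>0$, it suffices to prove $\Phi(s)\ge\min\{s,2k-s\}\log(2k)$ for every integer $s\in\setn{2k}$. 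The case $s=0$ (hence $m=0$) is the trivial identity $1\le1$, so I assume $s\ge1$ and split according to $s\le k$ or $s\ge k$.

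For $s\le k$ the required bound $\Phi(s)\ge s\log(2k)$ simplifies to $s\,S(2k-1)\ge 2k\,S(s)$, i.e.\ $\frac{S(s)}{s}\le\frac{S(2k-1)}{2k}$. Because $\log i$ is nondecreasing, the running average $\frac{S(n)}{n}=\frac1n\sum_{i=1}^n\log i$ is nondecreasing in $n$, so $\frac{S(s)}{s}\le\frac{S(k)}{k}$, and it remains to check $\frac{S(k)}{k}\le\frac{S(2k-1)}{2k}$, that is $(k!)^2\le(2k-1)!$. This is equivalent to $\binom{2k}{k}\ge 2k$, which holds because $\binom{2k}{j}$ increases for $0\le j\le k$ and hence $\binom{2k}{k}\ge\binom{2k}{1}=2k$.

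For $s\ge k$ the bound $\Phi(s)\ge(2k-s)\log(2k)$ rearranges to $g(2k)\ge g(s)$, where $g(n):=\frac{S(n)+\log(2k)}{n}$. A short computation shows that $g(n+1)-g(n)$ has the same sign as $n\log(n+1)-S(n)-\log(2k)$, so $g(n+1)\ge g(n)$ is equivalent to $\frac{(n+1)^n}{n!}\ge 2k$. The ratio of consecutive values of $\frac{(n+1)^n}{n!}$ equals $(1+\frac1{n+1})^{n+1}>1$, so this quantity is increasing in $n$ and, on $n\in\{k,\dots,2k-1\}$, is least at $n=k$; thus it suffices to establish the building block $\frac{(k+1)^k}{k!}\ge 2k$. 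I would deduce this from the binomial estimate $(1+\frac1k)^k\ge 1+k\cdot\frac1k=2$ together with $k^{k-1}\ge k!$ (immediate by induction, since $(k+1)^k\ge(k+1)k^{k-1}\ge(k+1)!$): indeed $(k+1)^k=k^k(1+\frac1k)^k\ge 2k^k=2k\cdot k^{k-1}\ge 2k\cdot k!$. Hence $g$ is nondecreasing on $\{k,\dots,2k\}$ and $g(s)\le g(2k)$, finishing this case. Finally, for $k=1$ the three admissible vectors $m\in\{(0,0),(1,0),(0,1)\}$ are checked directly to yield equality.

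I expect the main obstacle to be twofold: first, recognizing the right auxiliary monotone quantities — the average $\frac{S(n)}{n}$ for the low range and $g(n)=\frac{S(n)+\log(2k)}{n}$ for the high range — whose monotonicity collapses the two-parameter inequality onto its endpoints; and second, pinning down the two scalar building blocks $\binom{2k}{k}\ge 2k$ and, more delicately, $\frac{(k+1)^k}{k!}\ge 2k$. A naive induction in $s$ is doomed, because near $s=2k$ the increment $(s+1)(s+2)$ of $s!$ already exceeds the per-unit factor $((2k)!)^{1/k}$ gained on the right; the inequality is inherently global, the slack accumulated at small $s$ compensating the deficit near $s=2k$, which is exactly what the averaging reformulation makes usable.
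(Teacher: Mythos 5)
Your proposal is correct; I checked each step. The logarithmic reformulation $m_2\log(2k)\le s\,S(2k)-2k\,S(s)$ with $s=2m_1+m_2$ is an exact restatement, the elimination of $m_2$ via $m_2\le\min\{s,2k-s\}$ is precisely what $m_1+m_2\le k$ gives, the sign computation for $g(n+1)-g(n)$ and the monotonicity of $\frac{(n+1)^n}{n!}$ are right, and the two scalar inputs $(k!)^2\le(2k-1)!$ (equivalently $\binomial{2k}{k}\ge 2k$) and $(k+1)^k\ge 2k\cdot k!$ are correctly established. Your route, however, is genuinely different from the paper's. The paper first observes that $(\ell!)^{1/\ell}$ and $((2\ell-1)!)^{1/\ell}$ are increasing in $\ell$, so the right-hand side of the lemma is increasing in $k$ and one may assume $\vecsum{m}=k$ and $m_2\ge1$; then $2m_1+m_2=2k-m_2$ and everything collapses to the single estimate $\frac{((2k-m_2)!)^{2k}}{((2k)!)^{2m_1}((2k-1)!)^{m_2}}\le 1$, proved in one line from $\ell!\le\ell^{\ell-1}$ because, after factoring, the exponents of $2k-m_2$ cancel exactly. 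You instead keep $k$ fixed, reduce to a one-parameter inequality in $s$, and settle it by two monotone-average arguments split at $s=k$, with no boundary reduction. The paper's argument buys brevity (three lines past the monotonicity remark); yours buys transparency: it handles all $\vecsum{m}\le k$ in one sweep, isolates the extremal constraints cleanly, and your closing diagnosis is apt — a naive induction in $s$ does fail near $s=2k$, since $((2k)!)^{1/k}\approx(2k/\ee)^2$ is beaten by the increment $(s+1)(s+2)\approx 4k^2$, so the inequality really is global. Amusingly, both proofs bottom out at the same elementary fact: your $k^{k-1}\ge k!$ is exactly the paper's $\ell!\le\ell^{\ell-1}$.
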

\begin{proof} For $\ell\in\NN$, we have 
$\frac{(\ell!)^{\ell+1}}{((\ell+1)!)^{\ell}}
=\frac{\ell!}{(\ell+1)^{\ell}}\leq 1$ and 
$\frac{((2\ell-1)!)^{\ell+1}}{((2\ell+1)!)^{\ell}}
=\frac{(2\ell-1)!}{(2\ell(2\ell+1))^{\ell}}\leq 1$. 
Therefore $(\ell!)^{1/\ell}$ and $((2\ell-1)!)^{1/\ell}$
are both increasing in $\ell\in\NN$.
Hence we may assume that $m_2\geq1$ and $\vecsum{m}=k$. 
Using that $\ell!\leq \ell^{\ell-1}$ for $\ell\in\NN$, we get
\begin{align*}
\frac{((2m_1+m_2)!)^{2k}}{((2k)!)^{2m_1}((2k-1)!)^{m_2}}
&=((2k-m_2)!)^{m_2}\Bigl(\frac{(2k-m_2)!}{(2k)!}\Bigr)^{2k-2m_2}
\Bigl(\frac{(2k-m_2)!}{(2k-1)!}\Bigr)^{m_2}\\
&\leq  \frac{(2k-m_2)^{m_2(2k-m_2-1)} }{(2k-m_2)^{m_2(2k-2m_2)}
(2k-m_2)^{m_2(m_2-1)}}=1,
\end{align*}
which implies the assertion. 
\end{proof}

\begin{lemma}\label{l47384}
Let the assumptions of Lemma \ref{l3519219} hold and 
let $p_j=\sum_{r=1}^d\ab{p_{j,r}}$ for $j\in\set{k}$.
Set $c_k=((2k)!)^{1/(2k)}$ and $c_k'=((2k-1)!)^{1/(4k)}$. 
Then, for all $u\in[0,\frac{1}{2}]^k$, $v,w\in(0,\infty)^k$, 
we have
\begin{align*}
\Norm{\Bigl(\prod_{j=1}^kR_j^2\Bigr)G}
&\leq \prod_{j=1}^k\Bigl(C_j\sum_{r=1}^d\ab{p_{j,r}}\,
\min\Bigl\{\frac{\ab{p_{j,r}}}{\lambda_r},\,\frac{4}{w_j} p_j\Bigr\}
\Bigr),
\end{align*} 
where $C_j=\max\{
c_k+c_k'\frac{u_j}{v_j},\,(2(1-u_j)+c_k'u_jv_j)w_j\}$ for 
$j\in\set{k}$. In particular, for $u=0$ and $w_j=\frac{c_k}{2}$ for 
$j\in\set{k}$, we obtain
\begin{align*}
\Norm{\Bigl(\prod_{j=1}^kR_j^2\Bigr)G}
\leq\sqrt{(2k)!}\prod_{j=1}^k\Bigl(
\sum_{r=1}^d\ab{p_{j,r}}\,\min\Bigl\{\frac{\ab{p_{j,r}}}{\lambda_r},\,
\frac{8}{c_k}p_j\Bigr\}\Bigr).
\end{align*} 
\end{lemma}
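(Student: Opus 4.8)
The plan is to exploit the commutativity of the Banach algebra $\calm$ to regard $\prod_{j=1}^k R_j^2$ as a convolution product of $2k$ factors, in which each $R_j$ occurs twice, and then to estimate its norm by splitting every factor into a \emph{smooth} and a \emph{crude} part. Concretely, for each $j$ I would use the threshold implicit in the second entry $\frac{4}{w_j}p_j$ of the minimum to write $R_j=R_j'+R_j''$, where $R_j'=\sum_{r\in A_j}p_{j,r}W_r$ collects those indices $r$ for which $\frac{\ab{p_{j,r}}}{\lambda_r}$ is the smaller entry of the minimum and $R_j''=\sum_{r\in B_j}p_{j,r}W_r$ collects the remaining indices (those with $\lambda_r$ so small that the crude entry wins). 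Expanding $\prod_{j=1}^k(R_j'+R_j'')^2$ produces a sum of $3^k$ grouped terms once the two copies of each $R_j$ are classified pairwise into the types ``both smooth'', ``mixed'', and ``both crude''. For the smooth copies I would keep the factor $G$ and apply Lemma \ref{l3519219} to the corresponding sub-product, which supplies the smoothness gain carried by the factors $\frac{1}{\sqrt{\lambda_r}}$ together with a combinatorial factor $\sqrt{(2m_1+m_2)!}$, where $m_1$ and $m_2$ count the ``both smooth'' and ``mixed'' pairs; the crude copies I would peel off by submultiplicativity of the norm and $\norm{G}=1$, bounding each through $\norm{R_j''}\leq 2\sum_{r\in B_j}\ab{p_{j,r}}$ via $\norm{W_r}\leq 2$.

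The decisive simplification is that the coupling factor $\sqrt{(2m_1+m_2)!}$, which a priori entangles all $k$ pairs, decouples into a product over pairs by Lemma \ref{l46278945}: since $m_1+m_2\leq k$, that lemma yields $\sqrt{(2m_1+m_2)!}\leq c_k^{m_1}(c_k')^{m_2}$, so each ``both smooth'' pair carries weight $c_k$, each ``mixed'' pair carries weight $c_k'$, and each ``both crude'' pair carries weight $1$; this is exactly why $c_k$ and $c_k'$ are defined as they are. The whole estimate then factorizes over $j\in\set{k}$, and it remains to bound the per-pair contribution by $C_j\sum_{r=1}^d\ab{p_{j,r}}\min\{\frac{\ab{p_{j,r}}}{\lambda_r},\frac{4}{w_j}p_j\}$. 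A ``both smooth'' pair yields $c_k\sum_{r\in A_j}\frac{p_{j,r}^2}{\lambda_r}$, a ``both crude'' pair yields at most $(2\sum_{r\in B_j}\ab{p_{j,r}})^2\leq\frac{4}{w_j}p_j\,w_j\sum_{r\in B_j}\ab{p_{j,r}}$ after using $\sum_{r\in B_j}\ab{p_{j,r}}\leq p_j$, and each ``mixed'' pair produces a cross term comparable to $c_k'(\sum_{r\in A_j}\frac{p_{j,r}^2}{\lambda_r})^{1/2}\sum_{r\in B_j}\ab{p_{j,r}}$. The free parameters $u_j\in[0,\frac12]$ and $v_j$ enter here through a weighted Young inequality that distributes this cross term between the smooth expression $\sum_{r\in A_j}\frac{p_{j,r}^2}{\lambda_r}$ and the crude expression $\frac{4}{w_j}p_j\sum_{r\in B_j}\ab{p_{j,r}}$; collecting the contributions to each regime and taking the larger of the two resulting constants produces $C_j=\max\{c_k+c_k'\frac{u_j}{v_j},\,(2(1-u_j)+c_k'u_jv_j)w_j\}$, which dominates both regimes simultaneously and thereby lets the minimum be reinstated.

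The main obstacle I anticipate is precisely this last bookkeeping: verifying that, after Lemma \ref{l46278945} has decoupled the factorial, the summation over the three pair-types reassembles into the single product $\prod_{j=1}^k(C_j\sum_r\ab{p_{j,r}}\min\{\dots\})$ with the stated constants, and choosing the Young split of the mixed cross term so that it is absorbed into the smooth and crude parts without loss. The factor $w_j$ is arranged to cancel between the crude entry $\frac{4}{w_j}p_j$ of the minimum and the crude component of $C_j$. Once the general inequality is established, the final assertion follows by specializing to $u=0$ and $w_j=\frac{c_k}{2}$: then $C_j=\max\{c_k,2w_j\}=c_k$, the second entry of the minimum becomes $\frac{4}{w_j}p_j=\frac{8}{c_k}p_j$, and $\prod_{j=1}^k C_j=c_k^k=((2k)!)^{1/2}=\sqrt{(2k)!}$, which is exactly the claimed bound.
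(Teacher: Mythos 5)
Your proposal is correct and follows essentially the same route as the paper: the same threshold decomposition $R_j=R_j'+R_j''$, the same ternary expansion over pair-types with $G$ kept on the smooth sub-product and the crude copies peeled off by submultiplicativity, the same decoupling of $\sqrt{(2m_1+m_2)!}$ into $c_k^{m_1}(c_k')^{m_2}$ via Lemma \ref{l46278945}, and the same absorption of the cross term yielding $C_j$. The only cosmetic difference is that the paper implements your ``distribution of the cross term'' by explicitly writing $R_j^2=(R_j')^2+2u_jR_j'R_j''+Y_j$ with $Y_j=2(1-u_j)R_j'R_j''+(R_j'')^2$ (the constraint $u_j\leq\frac{1}{2}$ being what lets $\norm{Y_j}\leq 4(1-u_j)p_jb_j$ absorb the $(R_j'')^2$ term) before the weighted Young step with $v_j$, which is exactly the bookkeeping you anticipated.
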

We note that $c_k\geq8$, if $k\geq 10$. 
\begin{proof} 
We may assume that $p_j>0$ for all $j\in\set{k}$; further, set
\begin{gather*}
I_j=\Bigl\{r\in\set{d}\,\Big|\,\frac{\ab{p_{j,r}}}{\lambda_r}
\leq\frac{4}{w_j}p_j\Bigr\},\qquad
I_j^\compl=\set{d}\setminus I_j,\\
a_j=\sum_{r\in I_j}\frac{p_{j,r}^2}{\lambda_r}
=\sum_{r\in I_j}\ab{p_{j,r}}\,
\min\Bigl\{\frac{\ab{p_{j,r}}}{\lambda_r},\,\frac{4}{w_j}p_j\Bigr\},\\
b_j=2\sum_{r\in I_j^\compl}\ab{p_{j,r}}
=\frac{w_j}{2p_j}\sum_{r\in I_j^\compl}\ab{p_{j,r}}
\min\Bigl\{\frac{\ab{p_{j,r}}}{\lambda_r},\,\frac{4}{w_j}p_j\Bigr\},\\
R_j'=\sum_{r\in I_j}p_{j,r}W_r
=\sum_{r=1}^d\bbone_{I_j}(r)p_{j,r}W_r, \quad 
R_j''=\sum_{r\in I_j^\compl}p_{j,r}W_r,\\
Y_j=2(1-u_j)R_j'R_j''+(R_j'')^2.
\end{gather*}
In particular, we have 
\begin{gather*}
b_j\leq 2p_j,\quad 
a_j+\frac{2p_j}{w_j}b_j
=\sum_{r=1}^d\ab{p_{j,r}}\,
\min\Bigl\{\frac{\ab{p_{j,r}}}{\lambda_r},\,\frac{4}{w_j}p_j\Bigr\},\\
\norm{R_j'}\leq 2p_j-b_j, \;
\norm{R_j''}\leq b_j, \;
\norm{Y_j}\leq 2(1-u_j)(2p_j-b_j)b_j+b_j^2 \leq 4(1-u_j)p_jb_j.
\end{gather*}
Further, for $J_1,J_2\subseteq\set{k}$ with $J_1\cap J_2=\emptyset$, 
$\card{J_1}=m_1$, $\card{J_2}=m_2$, we have  $m_1+m_2\leq k$ and 
Lemmata \ref{l3519219} and \ref{l46278945} imply that
\begin{align*}
\Norm{\Bigl(\prod_{j\in J_1}(R_j')^2\Bigr)
\Bigl(\prod_{j\in J_2}R_j'\Bigr) G}
&\leq \sqrt{(2m_1+m_2)!}\Bigl(\prod_{j\in J_1}a_j\Bigr)
\prod_{j\in J_2}\sqrt{a_j}\\
&\leq\Bigl(\prod_{j\in J_1}(c_ka_j)\Bigr)
\prod_{j\in J_2}(c_k'\sqrt{a_j}).
\end{align*}
Therefore
\begin{align*}
\lefteqn{\Norm{\Bigl(\prod_{j=1}^kR_j^2\Bigr)G}
=\Norm{\Bigl(\prod_{j=1}^k((R_j')^2+2u_jR_j'R_j''+Y_j)\Bigr)G}}\\
&\hspace{1cm}=\Norm{\sum_{m\in\Zpl^2:\,\vecsum{m}\leq k}
\sum_{J_1\subseteq\set{k}:\,\card{J_1}=m_1}
\sum_{J_2\subseteq\set{k}\setminus J_1:\,\card{J_2}=m_2}
\Bigl(\prod_{j\in J_1}(R_j')^2\Bigr)\\
&\hspace{1cm}\quad{}\times\Bigl(\prod_{j\in J_2}(2u_jR_j'R_j'')\Bigr)
\Bigl(\prod_{j\in \set{k}\setminus(J_1\cup J_2)}Y_j\Bigr)G}\\
&\hspace{1cm}\leq\sum_{m\in\Zpl^2:\,\vecsum{m}\leq k}
\sum_{J_1\subseteq\set{k}:\,\card{J_1}=m_1}
\sum_{J_2\subseteq\set{k}\setminus J_1:\,\card{J_2}=m_2}
\Norm{\Bigl(\prod_{j\in J_1}(R_j')^2\Bigr)
\Bigl(\prod_{j\in J_2}R_j'\Bigr)G}\\
&\hspace{1cm}\quad{}\times
\Bigl(\prod_{j\in J_2}(2u_j\norm{R_j''})\Bigr)
\prod_{j\in \set{k}\setminus(J_1\cup J_2)}\norm{Y_j}\\
&\hspace{1cm}\leq\sum_{m\in\Zpl^2:\,\vecsum{m}\leq k}
\sum_{J_1\subseteq\set{k}:\,\card{J_1}=m_1}
\sum_{J_2\subseteq\set{k}\setminus J_1:\,\card{J_2}=m_2}
\Bigl(\prod_{j\in J_1}(c_ka_j)\Bigr)\\
&\hspace{1cm}\quad{}\times
\Bigl(\prod_{j\in J_2}(2c_k'u_j\sqrt{a_j}\,b_j)\Bigr) 
\prod_{j\in \set{k}\setminus(J_1\cup J_2)}(4(1-u_j)p_jb_j),
\end{align*}
giving
\begin{align*}
\Norm{\Bigl(\prod_{j=1}^kR_j^2\Bigr)G}
\leq \prod_{j=1}^d\Bigl(c_k a_j
+2c_k'u_j\sqrt{\frac{a_j}{v_j}\,b_j^2v_j}+4(1-u_j)p_jb_j\Bigr).
\end{align*}
Using that $2\sqrt{xy}\leq x+y$ for $x,y\in[0,\infty)$, 
we obtain, for $j\in\set{d}$, 
\begin{align*}
\lefteqn{c_k a_j+2c_k'u_j
\sqrt{\frac{a_j}{v_j}b_j^2v_j}+4(1-u_j)p_jb_j}\\
&\hspace{1cm}\leq\Bigl(c_k+c_k'\frac{u_j}{v_j}\Bigr)a_j
+(2(1-u_j)+c_k'u_jv_j)2p_jb_j\\
&\hspace{1cm}\leq \max\Bigl\{c_k+c_k'\frac{u_j}{v_j},\,
(2(1-u_j)+c_k'u_jv_j)w_j
\Bigr\}\Bigl(a_j+\frac{2p_j}{w_j}b_j\Bigr)\\
&\hspace{1cm}=C_j\sum_{r=1}^d\ab{p_{j,r}}\,
\min\Bigl\{\frac{\ab{p_{j,r}}}{\lambda_r},\,\frac{4}{w_j}p_j\Bigr\},
\end{align*}
which implies the assertion.
\end{proof}

\begin{corollary}\label{c6645838}
Under the assumptions of Lemma \ref{l47384}, we have 
\begin{align*}
\Norm{\Bigl(\prod_{j=1}^kR_j^2\Bigr)G}
&\leq  D_k \,k!\prod_{j=1}^k\Bigl(\sum_{r=1}^d\ab{p_{j,r}}\,
\min\Bigl\{\frac{\ab{p_{j,r}}}{\lambda_r},\,p_j\Bigr\}\Bigr),
\end{align*} 
where $w_j=4$, $(j\in\set{k})$, if $k\in\set{9}$,  
and the values of $u_j$, $v_j$, $D_k$ 
are given in Table 2 below.
\begin{center}
\begin{tabular}{c||c|c|c|c|c}
\multicolumn{6}{l}{\textup{Table 2: Explicit values of the 
constants $D_k$ in Corollary \ref{c6645838}}}\\ \hline
$k$ & $1$ & $2$ & $3$ & $4$ & $5$ \\ \hline \hline 
$u_j$  & $0.5000$ & $0.5000$ & $0.5000$ & $0.5000$ & $0.4500$ 
\\ \hline
$v_j$  & $0.1708$ & $0.2574$ & $0.3589$ & $0.4666$ & $0.5192$ 
\\ \hline 
$D_k$ &  $4.342$ & $10.784$ & $21.721$  & $40.687$ & $74.672$ 
\medskip\\ \hline 
$k$ & $6$ & $7$ & $8$ & $9$ & $\geq 10$\\ \hline \hline 
$u_j$ & $0.3000$ & $0.1996$ & $0.1500$ & $0.0500$ & $0$\\ \hline
$v_j$ & $0.4414$ & $0.4099$ & $0.5002$ & $0.4560$ & $1$\\ \hline 
$D_k$ &  $125.448$ & $186.872$ & $253.020$ & $305.314$ &
$\frac{\sqrt{(2k)!}}{k!}$
\end{tabular}
\end{center}
\end{corollary}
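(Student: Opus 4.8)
The plan is to obtain the corollary as a direct specialization of Lemma~\ref{l47384}: I would choose the free parameters $u,v,w$ in that lemma constant in $j$ and fix $w_j$ so that the second entry of the inner minimum reduces to $p_j$. Since that entry is $\frac{4}{w_j}p_j$, the correct choice is $w_j=4$, exactly as prescribed for $k\in\set{9}$; the factor $\frac{4}{w_j}$ then equals $1$ and the bound of Lemma~\ref{l47384} already displays the term $\min\{\ab{p_{j,r}}/\lambda_r,\,p_j\}$ that appears in the corollary. I would split the argument into the regimes $1\leq k\leq 9$ and $k\geq 10$.

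For $k\in\set{9}$, fixing $w_j=4$ and setting $u_j=u$, $v_j=v$ (the same for every $j$), Lemma~\ref{l47384} gives
\begin{align*}
\Norm{\Bigl(\prod_{j=1}^kR_j^2\Bigr)G}
\leq C^k\prod_{j=1}^k\Bigl(\sum_{r=1}^d\ab{p_{j,r}}
\min\Bigl\{\frac{\ab{p_{j,r}}}{\lambda_r},\,p_j\Bigr\}\Bigr),
\end{align*}
with $C=\max\{c_k+c_k'\frac{u}{v},\,4(2(1-u)+c_k'uv)\}$. Setting $D_k=C^k/k!$ then reduces everything to minimizing $C=C(u,v)$ over $u\in[0,\frac12]$ and $v\in(0,\infty)$. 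For each fixed $u$ the two branches of the maximum are monotone in opposite directions in $v$ (the first decreasing, the second increasing), so the inner optimum is attained where they coincide; minimizing the common value over $u$ --- at the boundary $u=\frac12$ for $1\leq k\leq4$ and in the interior for $5\leq k\leq9$ --- produces the entries of Table~2. One verifies a posteriori that at each tabulated pair $(u,v)$ the two branches are indeed equal, and the listed $D_k$ are $C^k/k!$ rounded upward.

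For $k\geq10$ I would instead invoke the final display of Lemma~\ref{l47384}, namely the choice $u=0$, $w_j=\frac{c_k}{2}$ (which corresponds to $u_j=0$, $v_j=1$ in Table~2), yielding
\begin{align*}
\Norm{\Bigl(\prod_{j=1}^kR_j^2\Bigr)G}
\leq\sqrt{(2k)!}\prod_{j=1}^k\Bigl(\sum_{r=1}^d\ab{p_{j,r}}
\min\Bigl\{\frac{\ab{p_{j,r}}}{\lambda_r},\,\frac{8}{c_k}p_j\Bigr\}\Bigr).
\end{align*}
By the note following Lemma~\ref{l47384} we have $c_k\geq8$ for $k\geq10$, so $\frac{8}{c_k}\leq1$; since $t\mapsto\min\{a,t\}$ is nondecreasing, I may replace $\frac{8}{c_k}p_j$ by $p_j$ without decreasing the right-hand side. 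As $\sqrt{(2k)!}=\frac{\sqrt{(2k)!}}{k!}\,k!$, this is precisely the asserted bound with $D_k=\frac{\sqrt{(2k)!}}{k!}$.

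The only genuine obstacle is the numerical optimization underlying Table~2 for $1\leq k\leq9$: although the balancing condition collapses it to a single one-dimensional minimization, the defining equations are transcendental in $u$ and $v$ (they involve $c_k=((2k)!)^{1/(2k)}$ and $c_k'=((2k-1)!)^{1/(4k)}$), so the tabulated values must be computed numerically and checked to be genuine upper bounds rather than obtained in closed form. Every other step is a mechanical substitution into Lemma~\ref{l47384}.
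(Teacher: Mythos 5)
Your proposal is correct and coincides with the paper's intended derivation: the corollary is exactly Lemma~\ref{l47384} specialized to $w_j=4$ (so that $\frac{4}{w_j}p_j=p_j$) with the tabulated $(u_j,v_j)$ balancing the two branches of $C_j$ and $D_k=C^k/k!$ rounded upward, while for $k\geq10$ one uses the lemma's final display ($u=0$, $w_j=\frac{c_k}{2}$) together with the remark $c_k\geq8$ to replace $\frac{8}{c_k}p_j$ by $p_j$. The numerical entries of Table~2 indeed verify the balancing condition (e.g.\ for $k=1$: $c_1+\frac{0.5}{0.1708}\approx 4.342\approx(1+0.5\cdot0.1708)\cdot4$), so nothing is missing.
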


\begin{lemma}\label{l261947}
Let $d\in\NN$. For $r\in\set{d}$, let $p_r\in[0,1]$, 
$\lambda_r\in(0,\infty)$ with 
$\lambda_r\geq p_r$, $U_r\in\calf$. 
We assume that $p:=\sum_{r=1}^dp_r\leq 1$. 
Let $R=\sum_{r=1}^d p_r(U_r-\dirac_0)$, 
$G=\exp(\sum_{r=1}^d \lambda_r(U_r-\dirac_0))$. 
Let $u\in[0,\frac{1}{2}]$, $v,w\in(0,\infty)$ and $w_0\in(1,\infty)$ 
be the unique solution of $f(w_0)=\frac{2}{w}$, where
$f(x)=x\log(1+\frac{1}{x-1})-1=\int_0^1\frac{t}{x-t}\,\dd t$ for 
$x\in(1,\infty)$. Then, letting 
$C=\max\{(\sqrt{2}+\frac{u}{v})\frac{2}{w},\,4(1-u)+2uv\}$, 
\begin{align*}
\norm{((\dirac_0+R)\ee^{-R}-\dirac_0)G}
\leq C \sum_{r=1}^dp_r\min\Bigl\{w_0\frac{p_r}{\lambda_r},\,p\Bigr\}.
\end{align*}
In particular, if $u=\frac{1}{2}$, $v=0.47248$ and $w=2$, then 
$C\leq 2.473$ and $w_0\leq 1.256$, giving 
\begin{align*}
\norm{((\dirac_0+R)\ee^{-R}-\dirac_0)G}
\leq 3.11\sum_{r=1}^dp_r\min\Bigl\{\frac{p_r}{\lambda_r},\,p\Bigr\}.
\end{align*}
\end{lemma}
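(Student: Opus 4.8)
The plan is to reduce everything to the case $k=1$ of Lemma \ref{l47384} by an integral representation. Write $W_r=U_r-\dirac_0$, so that $R=\sum_{r=1}^dp_rW_r$ and $G=\exp(\sum_{r=1}^d\lambda_rW_r)$. Differentiating in the commutative Banach algebra $\calm$ (where $R$ commutes with $\ee^{-sR}$) gives $\frac{\dd}{\dd s}\bigl((\dirac_0+sR)\ee^{-sR}\bigr)=-sR^2\ee^{-sR}$, and integrating over $s\in[0,1]$ yields
\[
(\dirac_0 + R)\ee^{-R} - \dirac_0 = -\int_0^1 s\,R^2\ee^{-sR}\,\dd s .
\]
Since $-sR+\sum_{r=1}^d\lambda_rW_r=\sum_{r=1}^d(\lambda_r-sp_r)W_r$, multiplication by $G$ gives $\ee^{-sR}G=\exp(\sum_{r=1}^d(\lambda_r-sp_r)W_r)$, and the triangle inequality for the $\calm$-valued integral leads to
\[
\norm{((\dirac_0+R)\ee^{-R}-\dirac_0)G}
\leq \int_0^1 s\,\Norm{R^2\exp\Bigl(\sum_{r=1}^d(\lambda_r-sp_r)W_r\Bigr)}\,\dd s .
\]

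For $s\in[0,1)$ one has $\lambda_r-sp_r\geq\lambda_r(1-s)>0$ (using $p_r\leq\lambda_r$), so I would apply Lemma \ref{l47384} with $k=1$, with $\lambda_r$ replaced by $\lambda_r-sp_r$ and with $p_{1,r}=p_r$, $p_1=p$. Since $c_1=\sqrt2$ and $c_1'=1$, this bounds the integrand by $C_1\sum_{r=1}^dp_r\min\{\frac{p_r}{\lambda_r-sp_r},\,\frac4wp\}$ with $C_1=\max\{\sqrt2+\frac uv,\,(2(1-u)+uv)w\}$, and a direct computation shows $C=\frac2wC_1$. Interchanging the nonnegative sum and integral, it then suffices to prove, for each $r$, the scalar inequality
\[
\int_0^1 s\min\Bigl\{\frac{p_r}{\lambda_r-sp_r},\,\frac4w p\Bigr\}\,\dd s
\leq \frac2w\min\Bigl\{w_0\frac{p_r}{\lambda_r},\,p\Bigr\}.
\]

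This last inequality is the heart of the argument and the step I expect to be the main obstacle; it is precisely where $f$ and $w_0$ are engineered to fit. Setting $x=\lambda_r/p_r\geq1$ turns the first entry of the minimum into $\frac1{x-s}$, and $\int_0^1\frac s{x-s}\,\dd s=f(x)$ by the integral representation of $f$ in the statement. The key monotonicity fact is that $xf(x)=\frac12+\int_0^1\frac{s^2}{x-s}\,\dd s$ is decreasing on $(1,\infty)$, so for $x\geq w_0$ one gets $xf(x)\leq w_0f(w_0)=\frac{2w_0}{w}$, i.e.\ $f(x)\leq\frac{2w_0}{wx}$. I would then split on the right-hand minimum: if $w_0/x\leq p$ (hence $x\geq w_0/p\geq w_0$, as $p\leq1$), bound the integrand by its first entry to obtain $f(x)\leq\frac{2w_0}{wx}$; if $w_0/x>p$, bound it by its second entry to obtain $\int_0^1 s\frac4wp\,\dd s=\frac2wp$. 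In either case the target value is reached, and the endpoint $s=1$ contributes nothing. Summing over $r$ and using $C=\frac2wC_1$ gives the general bound.

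For the numerical specialization I would insert $u=\frac12$, $v=0.47248$, $w=2$: here $\sqrt2+\frac uv=2.4725\ldots$ and $(2(1-u)+uv)w=2.4725\ldots$, so $C=C_1\leq2.473$, while $f(1.256)=0.9977\ldots<1=\frac2w$ together with the monotonicity of $f$ forces $w_0<1.256$. Finally, since $w_0\geq1$ gives $\min\{w_0a,\,b\}\leq w_0\min\{a,\,b\}$, the general bound yields the factor $Cw_0\leq2.473\cdot1.256\leq3.11$, as claimed.
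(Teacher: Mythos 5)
Your proposal is correct and takes essentially the same route as the paper's own proof: the identical integral representation $(\dirac_0+R)\ee^{-R}-\dirac_0=-\int_0^1 s\,R^2\ee^{-sR}\,\dd s$, the same application of Lemma~\ref{l47384} with $k=1$ to $\ee^{-sR}G=\exp(\sum_{r=1}^d(\lambda_r-sp_r)(U_r-\dirac_0))$ yielding $C=\frac{2}{w}C_1$, the same function $f$ with $f(w_0)=\frac{2}{w}$ and the monotonicity of $xf(x)$, and the same numerical specialization giving $Cw_0\leq 2.473\cdot 1.256\leq 3.11$. The only difference is organizational and harmless: the paper first bounds $\int_0^1 t\min\{\cdot,\cdot\}\,\dd t$ by $\min\{\frac{w}{2}f(x),\,p\}$ and then compares this to $\min\{w_0\frac{p_r}{\lambda_r},\,p\}$ in two cases (using that both $f$ and $xf(x)$ decrease), whereas you split directly on the target minimum $\min\{w_0/x,\,p\}$, which is marginally cleaner and sidesteps the domain question for $f(p\lambda_r/p_r)$ when $p\lambda_r/p_r\leq 1$ that the paper's second case glosses over.
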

\begin{proof} We may assume that $p_r>0$ for all $r\in\set{d}$. 
It is easily shown that 
\begin{align*}
((\dirac_0+R)\ee^{-R}-\dirac_0)G=-\int_0^1 tR^2\exp(-tR)G\,\dd t,
\end{align*}
where the equality holds setwise. 
From Lemma \ref{l47384}, we obtain for $t\in(0,1)$ that
\begin{align*}
\norm{R^2\exp(-tR)G}
\leq C\frac{w}{2}
\sum_{r=1}^dp_r\min\Bigl\{\frac{p_r}{\lambda_r-tp_r},\,
\frac{4}{w}p\Bigr\}.
\end{align*}
Consequently
\begin{align*}
\norm{((\dirac_0+R)\ee^{-R}-\dirac_0)G}
&\leq C\frac{w}{2}\int_0^1 t 
\sum_{r=1}^dp_r\min\Bigl\{\frac{p_r}{\lambda_r-tp_r},\,
\frac{4}{w}p\Bigr\}\,\dd t\\
&\leq C\sum_{r=1}^dp_r
\min\Bigl\{\frac{w}{2}f\Bigl(\frac{\lambda_r}{p_r}\Bigr),\,p\Bigr\}.
\end{align*}
Let $r\in\set{d}$. If 
$\frac{w}{2}f(\frac{\lambda_r}{p_r})\leq p$, then 
$f(\frac{\lambda_r}{p_r})\leq \frac{2}{w}=f(w_0)$, 
giving 
$\frac{\lambda_r}{p_r}\geq w_0$, since $f$ is decreasing. Further,
$xf(x)=\int_0^1\frac{t}{1-t/x}\,\dd t$ is decreasing in 
$x\in(1,\infty)$, giving 
$\frac{w}{2}f(\frac{\lambda_r}{p_r})
\leq \frac{w}{2}\frac{p_r}{\lambda_r}w_0f(w_0)
=w_0\frac{p_r}{\lambda_r}$.
On the other hand, if 
$p\leq \frac{w}{2}f(\frac{\lambda_r}{p_r})$, then 
$f(w_0)
=\frac{2}{w}
\leq \frac{1}{p}f(\frac{\lambda_r}{p_r})
\leq f(p\frac{\lambda_r}{p_r})$
and so $p\frac{\lambda_r}{p_r}\leq w_0$, which implies that 
$p\leq w_0\frac{p_r}{\lambda_r}$. 
Therefore, in any case 
$\min\{\frac{w}{2}f(\frac{\lambda_r}{p_r}),\,p\}
\leq \min\{w_0\frac{p_r}{\lambda_r},\,p\}$. 
Together with the above, we obtain the assertion. 
\end{proof}
\subsection{Remaining proofs}\label{s275}
\noindent
\begin{proof}[Proof of the first inequality in 
Proposition \ref{p47296}]
Let the notation of Remark \ref{r637865}(a) be valid. 
Then $P(\sum_{r\in J}X_{j,r}=1)
=1-P(\sum_{r\in J}X_{j,r}=0)=\widetilde{p}_j$ for $j\in\set{n}$
and $\sum_{r\in J}\lambda_r=\widetilde{\lambda}$. Consequently
$\sum_{r\in J}S_{n,r}$ and $\sum_{r\in J}T_r$ have the distributions 
$\prod_{j=1}^n(\dirac_0+\widetilde{p}_j(\dirac_1-\dirac_0))$ and
$\Po(\widetilde{\lambda})$, respectively, and hence
\begin{align*}
\norm{F-G}
&=2\sup_{A\subseteq\Zpl^d}\ab{P(S_n\in A)-P(T\in A)}\\
&\geq 2\sup_{B\subseteq\Zpl}
\Ab{P\Bigl(\sum_{r\in J}S_{n,r}\in B\Bigr)
-P\Bigl(\sum_{r\in J}T_r\in B\Bigr)}\\
&=\norm{P^{\sum_{r\in J}S_{n,r}}-P^{\sum_{r\in J}T_r}}
=\Norm{\prod_{j=1}^n(\dirac_0+\widetilde{p}_j(\dirac_1-\dirac_0))
-\Po(\widetilde{\lambda})},
\end{align*}
which implies the first inequality in Proposition \ref{p47296}.
\end{proof}

\begin{proof}[Proof of Theorem \ref{t759375}] We first note that 
\begin{align}\label{e1649865}
F=\prod_{j=1}^nF_j=\prod_{j=1}^n((V_j+\dirac_0)\ee^{R_j})
=\sum_{k=0}^nH_k=G_n,
\end{align}
which implies that $F-G_\ell=\sum_{k=\ell+1}^nH_k$. 
For $j\in\set{n}$, we have 
$V_j=F_j\ee^{-R_j}-\dirac_0
=-\frac{g(-R_j)}{2}R_j^2$. Hence, for $k\in\setn{n}$,
\begin{align}\label{e729875}
H_k 
&=(-1)^k\sum_{J\subseteq\set{n}:\,\card{J}=k}
\Bigl(\prod_{j\in J}\frac{g(-R_j)}{2}\Bigr)
\Bigl(\prod_{j\in J} R_j^2\Bigr)\exp(\lambda(Q-\dirac_0)).
\end{align}
If $k\in\set{n}$, $J\subseteq\set{n}$ with $\card{J}=k$, then 
Lemma \ref{l47384} with $u=0$, $w_j=\frac{1}{\sqrt{2}}$
for $j\in J$ implies that 
\begin{align}\label{e917465}
\Norm{\Bigl(\prod_{j\in J}R_j^2\Bigr)\exp(\lambda(Q-\dirac_0))}
\leq \sqrt{(2k)!}\prod_{j\in J}\Bigl(\sum_{r=1}^dp_jq_{j,r}
\min\Bigl\{\frac{p_jq_{j,r}}{\lambda_r},\,2^{5/2}p_j\Bigr\}\Bigr),
\end{align}
since $((2k)!)^{1/(2k)}\geq\sqrt{2}$. On the other hand, for 
$j\in\set{n}$, $\norm{R_j}\leq 2p_j$ and therefore
\begin{align}\label{e141432}
\norm{g(-R_j)}
=\Norm{2\sum_{m=2}^\infty \frac{m-1}{m!}(-R_j)^{m-2}}
\leq 2\sum_{m=2}^\infty \frac{m-1}{m!}\norm{R_j}^{m-2}
\leq g(2p_j).
\end{align}
By (\ref{e729875}), (\ref{e917465}), (\ref{e141432}) and the 
polynomial theorem, we derive for $k\in\set{n}$, 
\begin{align*}
\norm{H_k}
&\leq\sum_{J\subseteq\set{n}:\,\card{J}=k}
\Bigl(\prod_{j\in J} \frac{\norm{g(-R_j)}}{2}\Bigr)
\Norm{\Bigl(\prod_{j\in J}R_j^2\Bigr)\exp(\lambda(Q-\dirac_0))}\\
&\leq\frac{\sqrt{(2k)!}}{2^k}
\sum_{J\subseteq\set{n}:\,\card{J}=k}
\prod_{j\in J}\Bigl(g(2p_j)p_j^2\sum_{r=1}^dq_{j,r}
\min\Bigl\{\frac{q_{j,r}}{\lambda_r},\,2^{5/2}\Bigr\}\Bigr)\\
&\leq\frac{\sqrt{(2k)!}}{k!\,2^k}
\Bigl(\sum_{j=1}^ng(2p_j)p_j^2\sum_{r=1}^dq_{j,r}
\min\Bigl\{\frac{q_{j,r}}{\lambda_r},\,2^{5/2}\Bigr\}\Bigr)^k
=\frac{\sqrt{(2k)!}}{k!\,2^k}(2^{3/2}\,\alpha_1)^k.
\end{align*}
It is easily shown that 
$\frac{\sqrt{(2k)!}}{k!\,2^k}$ is decreasing in $k\in\Zpl$. 
Consequently, if $\alpha_1<\frac{1}{2^{3/2}}$, then
\begin{align*}
\norm{F-G_\ell}
&\leq\sum_{k=\ell+1}^n\norm{H_k}
\leq\sum_{k=\ell+1}^n\frac{\sqrt{(2k)!}}{k!\,2^k}(2^{3/2}\,
\alpha_1)^k\\
&\leq \frac{\sqrt{(2(\ell+1))!}}{(\ell+1)!\,2^{\ell+1}}2^{3(\ell+1)/2}
\frac{\alpha_1^{\ell+1}}{1-2^{3/2}\alpha_1},
\end{align*}
which proves (\ref{e5254745}). 
\end{proof}
\noindent
\begin{proof}[Proof of Theorem \ref{t729587}]
We need a further bound for $\norm{H_k}$, $(k\in\set{n})$
in terms of~$\beta_1$. Lemma~\ref{l261947} gives 
\begin{align*}
\norm{H_1}
&=\Norm{\sum_{j=1}^n((\dirac_0+R_j)\ee^{-R_j}-\dirac_0)
\exp(\lambda(Q-\dirac_0))\Bigr)}
\leq  D_1'\beta_1.
\end{align*}
For $k\in\set{n}\setminus\{1\}$, Corollary \ref{c6645838} and the 
polynomial theorem imply that 
\begin{align*}
\norm{H_k}
&\leq\sum_{J\subseteq\set{n}:\,\card{J}=k}
\Bigl(\prod_{j\in J} \frac{g(2p_j)}{2}\Bigr)
\Norm{\Bigl(\prod_{j\in J}R_j^2\Bigr)\exp(\lambda(Q-\dirac_0))}\\
&\leq D_k\Bigl(\frac{g(2)}{2}\Bigr)^k\,k!
\sum_{J\subseteq\set{n}:\,\card{J}=k}
\prod_{j\in J}\Bigl(p_j^2\sum_{r=1}^dq_{j,r}
\min\Bigl\{\frac{q_{j,r}}{\lambda_r},\,1\Bigr\}\Bigr)
\leq D_k'\beta_1^k.
\end{align*}
Hence 
\begin{align*}
\norm{F-G_\ell}&\leq\sum_{k=\ell+1}^\infty D_k'\beta_1^k
=h_1(\beta_1)
\end{align*}
and, alternatively,
\begin{align*}
\norm{F-G_\ell}&\leq\norm{F}+\norm{G_\ell}
\leq 2+\sum_{k=1}^\ell\norm{H_k}
\leq 2+\sum_{k=1}^\ell D_k'\beta_1^k
=h_2(\beta_1).
\end{align*}
By the definition of $D_k'$ for $k\geq 10$, we know that 
$h_1(x)<\infty$ for $x\in[0,\frac{1}{g(2)})$.
Further, it is easily seen that $\frac{h_1(x)}{h_2(x)}$ is 
increasing in $x\in[0,\frac{1}{g(2)})$ with 
$\lim_{x\uparrow 1/g(2)}\frac{h_1(x)}{h_2(x)}=\infty$. 
Therefore, for all $\ell\in\setn{n}$, 
there exists a unique $x_\ell\in(0,\infty)$ with 
$h_1(x_\ell)=h_2(x_\ell)$. If $\beta_1\leq x_\ell$ then 
$\norm{F-G_\ell}
\leq\frac{h_1(\beta_1)}{\beta_1^{\ell+1}}\beta_1^{\ell+1}
\leq  \frac{h_1(x_\ell)}{x_\ell^{\ell+1}}\beta_1^{\ell+1}
=c_\ell\beta_1^{\ell+1}$. 
If $\beta_1>x_\ell$, then $\norm{F-G_\ell}
\leq\frac{h_2(\beta_1)}{\beta_1^{\ell+1}}\beta_1^{\ell+1}
\leq c_\ell\beta_1^{\ell+1}$.
Hence, generally we have
$\norm{F-G_\ell}\leq c_\ell\beta_1^{\ell+1}$. 
In particular,  
$x_0\in(0.128316,\,0.128317)$, 
$x_1\in(0.147522,\,0.147523)$,  
$x_2\in(0.189075,\,0.189076)$, 
$x_3\in(0.215065,\,0.215066)$, 
$x_4\in(0.226773,\,0.226774)$,
which implies the remaining part of the assertion. 
\end{proof}

\begin{proof}[Proof of Corollary \ref{c7435328}] 
The proof follows arguments very similar to those used in the proofs 
of Theorems 1 and 2 in \cite{MR2310979}, where a comparable result 
was 
shown, generalizing (\ref{e2345876}) and (\ref{e2345877}). 
The idea here is a standard approximation procedure: 
In the first step, construct a new set of distributions 
$\widetilde{Q}_{1},\dots,\widetilde{Q}_{n}$ of the form 
used in Theorems \ref{t759375} and \ref{t729587}, such that all 
the norms $\norm{Q_j-\widetilde{Q}_j}$, $(j\in\set{n})$ are small. 
This also leads to corresponding new (signed) measures 
$\widetilde{F}$ and $\widetilde{G}_\ell$. In the second step,
use the properties of the total variation distance to show that 
$\norm{F-\widetilde{F}}$ and $\norm{\widetilde{G}_\ell-G_\ell}$ are 
both small. Finally, use Theorems \ref{t759375} and \ref{t729587}
to estimate $\norm{\widetilde{F}-\widetilde{G}_\ell}$ and prove that 
the resulting bounds are close to the bounds in (\ref{e5254745b}) 
and (\ref{e87965b}). We omit the details. 
\end{proof}

\begin{proof}[Proof of Proposition \ref{c6724665}] 
Under the assumptions of Section \ref{s1545}, 
let $\tau:\,S\longrightarrow\frakX$, $x\mapsto \dirac_x$. 
For arbitrary $B\in\cals$, we then have 
$\pi_B\circ \tau=\bbone_{B}$
and $B=\tau^{-1}(\pi_B^{-1}(\{1\}))$, and hence
$\{\tau^{-1}(A)\,|\,A\in\cala\}=\cals$. 
In particular, $\tau$ is $\cals$-$\cala$-measurable. 
Let $\mu=\nu^\tau$ be the image measure of 
$\nu$ under $\tau$ defined on $(\frakX,\cala)$. 
For $B\in\cals$, we have 
$\{\dirac_x\,|\,x\in B\}=\pi_B^{-1}(\{1\})
\cap\pi_{S\setminus B}^{-1}(\{0\})\in \cala$ and
$\mu(\{\dirac_x\,|\,x\in B\})=\nu(B)$. This shows that,
since $\nu$ is $\sigma$-finite, this holds for $\mu$ as well. 
If $A\in\cala$ with $\mu(A)=0$, then 
$\nu(\tau^{-1}(A))=0$, and in turn 
$0=P^{X_j}(\tau^{-1}(A))=P((\tau\circ X_j)^{-1}(A))
=Q_j(A)$ and hence $Q_j\ll\mu$ for all $j\in\set{n}$. 
Let $\widetilde{f}_j$ be a Radon-Nikod\'{y}m density 
of $Q_j$ with respect to $\mu$ and set 
$\widetilde{f}=\frac{1}{\lambda}\sum_{j=1}^np_j\widetilde{f}_j$.  
As has been observed in Remark \ref{r2456}(c), 
for $j\in\set{n}$, $f_j=\frac{\widetilde{f}_j}{\widetilde{f}}
\bbone_{\{\widetilde{f}>0\}}$ is 
a Radon-Nikod\'{y}m density of $Q_j$ with respect to $Q$. 
From the above, we get that,
for each $B\in\cals$, a set $A\in\cala$ exists such that 
$B=\{\tau\in A\}$ and hence 
\begin{align*}
\int_{B}\widetilde{f}_j\circ\tau\,\dd\nu
=\int_{A}\widetilde{f}_j\,\dd\mu
=Q_j(A)=P^{X_j}(B)=\int_B \widetilde{h}_j\,\dd\nu. 
\end{align*}
Therefore $\widetilde{f}_j\circ\tau=\widetilde{h}_j$,
$\widetilde{f}\circ\tau=\widetilde{h}$ 
and $f_j\circ\tau
=\frac{\widetilde{f}_j\circ\tau}{\widetilde{f}\circ\tau}
\bbone_{\{\widetilde{f}\circ\tau>0\}}
=\frac{\widetilde{h}_j}{\widetilde{h}}
\bbone_{\{\widetilde{h}>0\}}$ $\nu$-almost everywhere. 
The assertion now follows from 
Corollary \ref{c7435328} and Remark \ref{r2456}(c) using that 
\begin{align*}
\widetilde{\beta}_1
&=\sum_{j=1}^np_j^2\int_{\{\widetilde{f}>0\}} \widetilde{f}_j
\min\Bigl\{
\frac{\widetilde{f}_j}{\lambda\widetilde{f}},\,1\Bigr\}\,\dd \mu
=\sum_{j=1}^np_j^2\int_{\{\widetilde{h}>0\}} \widetilde{h}_j
\min\Bigl\{\frac{\widetilde{h}_j}{\lambda\widetilde{h}},\,1
\Bigr\}\,\dd \nu
\end{align*}
and a similar calculation for $\widetilde{\alpha}_1$. 
\end{proof}

\section*{Acknowledgment}
The author would like to thank two anonymous reviewers, Andrew 
Barbour, and Lutz~Mattner for their comments which led to an improved 
version of the paper.
\raggedbottom 
\bibliographystyle{alea3}
\bibliography{rtv049}
\end{document}